\documentclass[12pt]{amsart}

\usepackage{amsfonts,latexsym,amsmath,amssymb,amsthm,mathrsfs}
\usepackage{graphicx}
\usepackage{tikz}
\usetikzlibrary{arrows.meta,calc}
\usepackage{needspace}
\usepackage{placeins}
\usepackage[labelsep=space,labelfont=up]{caption}

\usepackage[colorlinks=true,urlcolor=blue,linkcolor=blue,anchorcolor=blue,citecolor=blue]{hyperref}
\usepackage[nameinlink,capitalize]{cleveref}
\allowdisplaybreaks[4]

\makeatletter
\@namedef{subjclassname@2020}{\textup{2020} Mathematics Subject Classification}
\makeatother

\numberwithin{equation}{section}

\theoremstyle{plain}
\newtheorem{theorem}{Theorem}[section]
\newtheorem{lemma}[theorem]{Lemma}
\newtheorem{proposition}[theorem]{Proposition}
\newtheorem{corollary}[theorem]{Corollary}
\theoremstyle{definition}
\newtheorem{definition}[theorem]{Definition}
\theoremstyle{plain}
\newtheorem{example}[theorem]{Example}
\newtheorem{remark}[theorem]{Remark}

\theoremstyle{remark}
\newtheorem*{remark*}{Remark}

\newcommand{\R}{\mathbb R}
\newcommand{\dd}{\,d}
\newcommand{\Sn}{\mathbb S}
\newcommand{\sphereareanm}{|\Sn^{n-1}|}
\DeclareMathOperator{\Lip}{Lip}

\title{Alexandrov--Fenchel inequalities for closed axisymmetric hypersurfaces}

\author{Weimin Sheng}
\address{Weimin Sheng: School of Mathematical Sciences, Zhejiang University, Hangzhou 310058, China.}
\email{weimins@zju.edu.cn}

\author{Yinhang Wang}
\address{Yinhang Wang: School of Mathematical Sciences, Zhejiang University, Hangzhou 310058, China.}
\email{wangyinhang@zju.edu.cn}

\subjclass[2020]{Primary 53C42; Secondary 52A40, 53A07}
\keywords{Alexandrov--Fenchel inequality, quermassintegral, axisymmetric hypersurface, monotone rearrangement, nonconvex hypersurface}
\hypersetup{
  pdftitle={Alexandrov--Fenchel inequalities for closed axisymmetric hypersurfaces},
  pdfauthor={Weimin Sheng and Yinhang Wang},
  pdfsubject={Geometric inequalities for axisymmetric hypersurfaces},
  pdfkeywords={Alexandrov--Fenchel inequality, quermassintegral, higher-order mean curvature, axisymmetric hypersurface, monotone rearrangement, nonconvex hypersurface}
}

\begin{document}

\begin{abstract}
We establish sharp Alexandrov--Fenchel inequalities for closed,
embedded, axisymmetric hypersurfaces
$\Sigma\subset\mathbb R^{n+1}$, $n\ge2$, with admissible $C^{1,1}$ profiles.
 Let \(H_j\) be the $j$-th normalized mean curvature. If $\Sigma$ is axiconvex, i.e., every
hyperplane orthogonal to the symmetry axis meets the enclosed domain in an
$n$-dimensional ball or empty, then 
\[
\left(\frac{\int_\Sigma H_\ell\,dA}{|\mathbb S^n|}\right)^{1/(n-\ell)}
\le
\left(\frac{\int_\Sigma H_k\,dA}{|\mathbb S^n|}\right)^{1/(n-k)},
\qquad 0\le\ell<k\le n-1.
\]
Without axiconvexity, the inequalities hold for sign-corrected
integrals $\int_\Sigma\varepsilon_\theta^{\,j}H_j\,dA$, where
$\varepsilon_\theta=\operatorname{sgn}(\sin\theta)$ denotes the sign of the
azimuthal principal curvatures.  As applications, the Alexandrov--Fenchel inequalities remain valid whenever both \(\ell\) and \(k\) are even. We also prove
\[
\left(\frac{\int_\Sigma H_\ell\,dA}{|\mathbb S^n|}\right)^{1/(n-\ell)}
\le
\left(\frac{\int_\Sigma |H_k|\,dA}{|\mathbb S^n|}\right)^{1/(n-k)},
\qquad 0\le\ell<k\le n-1,\quad \ell\ \text{even}.
\]
Equality in each of the above inequalities holds if and only if $\Sigma$ is a round
sphere. The proof adapts the rearrangement and tangent angle folding approach of \cite{DHMT} from surfaces of revolution in \(\mathbb R^3\) to $n$-dimensional axisymmetric hypersurfaces in $\mathbb{R}^{n+1}$. The higher-dimensional construction uses weighted monotone rearrangement to produce convex comparison bodies with \(C^{1,1}\) boundaries.
\end{abstract}

\maketitle

\baselineskip16pt
\parskip1pt

\setcounter{tocdepth}{2}
\tableofcontents

\section{Introduction}

The Alexandrov--Fenchel inequalities
\cite{Alexandrov1937,Alexandrov1938} are among the fundamental inequalities in the Brunn--Minkowski theory. For a smooth convex body $\Omega\subset\R^{n+1}$ with boundary $\Sigma=\partial\Omega$, let
\[
 H_k=\binom nk^{-1}\sigma_k(\kappa_1,\ldots,\kappa_n)
 \qquad 0\le k\le n,
\]
be the normalized $k$-th mean curvature of $\Sigma$. The celebrated Alexandrov--Fenchel inequality states that
\begin{equation}\label{eq:intro-convex-AF-chain}
 \left(
   \frac{\int_\Sigma H_\ell\dd A}{|\Sn^n|}
 \right)^{1/(n-\ell)}
 \le
 \left(
   \frac{\int_\Sigma H_k\dd A}{|\Sn^n|}
 \right)^{1/(n-k)}
 \qquad(0\le\ell<k\le n-1).
\end{equation}
Equality in
\eqref{eq:intro-convex-AF-chain} holds if and only if $\Sigma$ is a round sphere. These include the Minkowski inequality
\begin{equation}\label{eq:intro-convex-minkowski}
  \int_\Sigma H_1 \dd A
  \ge
  |\Sn^n|^{1/n}|\Sigma|^{(n-1)/n}
\end{equation}
as a special case. We refer the reader to Schneider's book \cite{Schneider} for a detailed introduction. 

Extending \eqref{eq:intro-convex-AF-chain} beyond convexity is one of the important questions in classical geometry. Using the inverse curvature flow introduced by Gerhardt \cite{Gerhardt1990} and Urbas \cite{Urbas1990}, Guan-Li \cite{GuanLi} showed that the inequalities \eqref{eq:intro-convex-AF-chain} still hold for star-shaped and $k$-convex domains. Without star-shapedness, Chang-Wang \cite{ChangWang2013} obtained \eqref{eq:intro-convex-AF-chain} under $(k+1)$-convexity by optimal transport, although their constants were not sharp. For inequalities relating volume to the integral of $H_k$, Chang and Wang \cite{ChangWang} obtained sharp bounds
for $k=1,2$, which Qiu \cite{Qiu} extended to higher orders under
the same $(k+1)$-convexity assumption. Using the weak inverse mean curvature flow developed in \cite{HuiskenIlmanen2001}, Huisken \cite{Huisken2009} showed that the Minkowski inequality
\eqref{eq:intro-convex-minkowski} also holds for outward-minimizing domains, see also \cite{FreireSchwartz2014}. Agostiniani-Fogagnolo-Mazzieri \cite{AFM}
developed an alternative approach based on nonlinear potential
theory, avoiding the high-dimensional regularity difficulties
of the weak inverse mean curvature flow.

In this paper, we consider the validity of \eqref{eq:intro-convex-AF-chain} for axisymmetric hypersurfaces. Our investigation was inspired by the work \cite{DHMT}. In $\R^3$, Dalphin--Henrot--Masnou--Takahashi \cite{DHMT} proved the
Minkowski inequality \eqref{eq:intro-convex-minkowski} for axiconvex $C^{1,1}$ surfaces by rearranging
the tangent angle.
We extend this construction to higher dimensions and higher mean curvature integrals.

\begin{definition}\label{def:admissible-axisymmetric}
	Let $n\ge2$. Let $\Omega\subset\R^{n+1}$ be a bounded domain and 
	$\Sigma=\partial\Omega$ be connected, embedded and
	axisymmetric.  We call
	$\Sigma$ a member of the \emph{admissible $C^{1,1}$ axisymmetric class}
	if its meridian in the half-plane $\{x\ge0\}$ is an injective
	arclength curve
	\[
	\gamma(s)=(x(s),z(s))\in C^{1,1}([0,L];\R^2),
	\]
	oriented from the lower to the upper pole, with
	\[
	x(0)=x(L)=0,\qquad x(s)>0\quad\text{for }0<s<L,
	\qquad z(L)>z(0)=0.
	\]
	We require its unit tangent to have a Lipschitz angle lift
	$\theta\in W^{1,\infty}(0,L)$ satisfying
	\[
	\dot x=\cos\theta,\qquad \dot z=\sin\theta,\qquad
	\theta(0)=0,\qquad \theta(L)=\pi.
	\]
\end{definition}

In particular, rotational hypersurfaces generated by closed meridians disjoint from the symmetry axis are excluded.

An admissible hypersurface is {axiconvex} if
$0\le\theta\le\pi$,
equivalently, if the intersection of the enclosed domain with every hyperplane orthogonal to the symmetry axis is either an \(n\)-dimensional ball centered on the axis or empty.
Within the admissible class, the following inclusions are strict:
(convex and axisymmetric) $\subset$ axiconvex $\subset$ axisymmetric.
Moreover, \cref{ex:axiconvex-outside-gamma} gives an axiconvex
example satisfying $\sigma_k<0$ at the equator for every
$1\le k\le n$. Thus, the axiconvex class is not contained in
the class of star-shaped $k$-convex hypersurfaces for any such $k$.

Our first result establishes the Alexandrov--Fenchel inequalities
for closed axiconvex hypersurfaces.

\begin{theorem}
\label{thm:closed-high-dim}
Let $\Sigma^n\subset\R^{n+1}$ be an axiconvex hypersurface of the admissible
$C^{1,1}$ axisymmetric class.  Define
\[
 \mathcal M_j(\Sigma):=\int_\Sigma H_j\dd A,
 \qquad 0\le j\le n-1.
\]
Then, for $0\le\ell<k\le n-1$,
\begin{equation}\label{eq:axiconvex-curvature-inequality}
	 \left(\frac{\mathcal M_\ell(\Sigma)}{|\Sn^n|}\right)^{1/(n-\ell)}
	\le
	\left(\frac{\mathcal M_k(\Sigma)}{|\Sn^n|}\right)^{1/(n-k)}.
\end{equation}
Equality holds if and only if $\Sigma$
is a round sphere.
\end{theorem}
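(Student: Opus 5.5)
Our plan is to follow the rearrangement strategy of \cite{DHMT}: write each $\mathcal M_j(\Sigma)$ as a one-dimensional integral over the meridian, then build a convex axisymmetric comparison body whose quermassintegrals are related to those of $\Sigma$ in the right directions, and finally apply the classical inequality \eqref{eq:intro-convex-AF-chain} to that body.

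\textbf{Step 1: integral formulas.} For the meridian $\gamma=(x,z)$ the principal curvatures are the meridian curvature $\dot\theta$ (multiplicity one) and the azimuthal curvature $\sin\theta/x$ (multiplicity $n-1$). With $dA=|\Sn^{n-1}|x^{n-1}\,ds$ we get
\[
\binom nj H_j=\binom{n-1}{j}\Big(\frac{\sin\theta}{x}\Big)^j+\binom{n-1}{j-1}\dot\theta\Big(\frac{\sin\theta}{x}\Big)^{j-1}.
\]
Integrating the $\dot\theta$ term by parts, using $\frac{d}{ds}(\sin^j\theta\, x^{n-j})=j\sin^{j-1}\theta\cos\theta\,\dot\theta\, x^{n-j}+(n-j)\sin^j\theta\, x^{n-j-1}\cos\theta$, should give (for $j\le n-1$)
\[
\mathcal M_j(\Sigma)=c_{n,j}\int_0^L x^{n-1-j}\sin^j\theta\,\Phi_j(\theta)\,ds
\]
for suitable constants and weights. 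The aim is to express $\mathcal M_j$ as $\int_0^{L}x^{n-1-j}f_j(\theta)\,ds$ with no derivative of $\theta$. The cleanest route is to parametrize by height: for axiconvex $\Sigma$ we have $\sin\theta\ge0$, so $z$ is monotone and each level $z$ meets the meridian in a well-defined set. The conclusion of this step is that $\mathcal M_j(\Sigma)\ge0$ for axiconvex $\Sigma$, and that $\mathcal M_j$ is a positive-coefficient combination of integrals of $x^{n-1-j}$ against a nonnegative $\theta$-weight.

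\textbf{Step 2: weighted monotone rearrangement.} We replace $\theta$ on $[0,L]$ by its nondecreasing rearrangement $\theta^*$ with respect to a suitable weighted measure. The weight should be chosen, likely $x^{n-1}\,ds$ or a height-based measure, so that the resulting profile $\gamma^*$ closes up: we need $\int\cos\theta^*=0$. Since $0\le\theta^*\le\pi$ is nondecreasing, $\gamma^*$ bounds a convex body $\Omega^*$ with $C^{1,1}$ boundary. We then compare quantities. The lower index should be preserved or increased, $\mathcal M_\ell(\Sigma)\le\mathcal M_\ell(\Sigma^*)$; the natural case is $\ell=0$ (area), where a weight-preserving rearrangement keeps the area fixed. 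The higher index should satisfy $\mathcal M_k(\Sigma^*)\le\mathcal M_k(\Sigma)$, by a Hardy--Littlewood/Riesz-type inequality: rearranging $\theta$ monotonically increases $x$ pointwise in the relevant sense while decreasing the $k$-weighted term.

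\textbf{Main obstacle.} Making the comparison hold simultaneously for both indices $\ell$ and $k$ is the hard part, because one rearrangement cannot easily preserve two quermassintegrals. We would try first to reduce to consecutive pairs, or to $\ell=0$ combined with the convex chain. If Step 2 gives $\mathcal M_\ell(\Sigma)=\mathcal M_\ell(\Sigma^*)$ by choosing the weight $x^{n-1-\ell}\sin^\ell\theta\,ds$ adapted to $\ell$, together with $\mathcal M_k(\Sigma^*)\le\mathcal M_k(\Sigma)$, then \eqref{eq:intro-convex-AF-chain} for the convex body $\Sigma^*$ gives the result directly. So the weight is chosen depending on $\ell$, and the key lemma becomes a rearrangement inequality for the $k$-integrand under an $\ell$-adapted rearrangement. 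The $C^{1,1}$ regularity is then handled by the admissibility of $\theta^*\in W^{1,\infty}$.

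\textbf{Equality.} Equality forces equality in the convex inequality, so $\Sigma^*$ is a round sphere. It also forces equality in the rearrangement inequality, so $\theta$ was already monotone. Hence $\Sigma=\Sigma^*$ up to translation along the axis, and $\Sigma$ is a round sphere.
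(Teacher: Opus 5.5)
Your architecture matches the paper's: derivative-free meridian formulas, monotone rearrangement of the angle in a weighted coordinate, then the classical inequality for the convex comparison body. But several ingredients are misidentified, and the decisive one is missing.

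\textbf{Step 1 does not work as written.} Differentiating $\sin^j\theta\,x^{n-j}$ produces $\dot\theta\sin^{j-1}\theta\cos\theta\,x^{n-j}$, not the term $\dot\theta\sin^{j-1}\theta\,x^{n-j}$ you need to remove. The correct primitive is $J_j(\theta)=\int_0^\theta\sin^{j-1}v\,dv$. It yields $\mathcal M_j=\frac{|\Sn^{n-1}|(n-j)}{n}\int_0^LF_j(\theta)x^{n-j-1}ds$ with $F_j=\sin^j\theta-jJ_j(\theta)\cos\theta$. Since $F_j(\pi)=jJ_j(\pi)>0$, this is not of your form $\sin^j\theta\,\Phi_j(\theta)$. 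Hence the weight that makes $\mathcal M_j$ rearrangement-invariant is $d\tau=x^{n-j-1}ds$; the factor $\sin^\ell\theta$ in your proposed measure does not belong there.

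\textbf{Closure is not $\int\cos\theta^*=0$.} After rearranging in $\tau$, you must rebuild $Y^*(\tau)=\int_0^\tau\cos\vartheta^\uparrow$ and $X^*=(mY^*)^{1/m}$, then restore arclength via $d\sigma=(X^*)^{1-m}d\tau$. Hardy--Littlewood then gives $X^*\ge X>0$.

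\textbf{The genuine gap is the comparison for the non-preserved index.} You assert it only heuristically. Pointwise growth of the radius is not enough. In the $\tau$-coordinate the other functional is $\int_0^TA(c)(\mathcal Pc)^a\,d\tau$ with $c=\cos\vartheta$ and $A(c)=F(\arccos c)$, and rearrangement changes both the coefficient $A(c)$ and the primitive $\mathcal Pc$.

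The paper preserves the top index $k$ (weight $x^{n-k-1}$). This turns each $\mathcal M_\ell$, $\ell<k$, into the above functional with $a=(k-\ell)/(n-k)>0$. It then proves Proposition~\ref{prop:primitive-rearrangement} by dyadic block exchanges, whose sign comes from the strict monotonicity of $A_\ell(c)/c$ on $(-1,0)$ and $(0,1)$. A first-variation argument gives strictness, which your equality step also needs. This dissolves your ``main obstacle'': one never has to preserve two functionals.

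Your reversed choice (preserve $\ell$, decrease $k$) leads to the same functional with $a=(\ell-k)/(n-\ell)\in(-1,0)$. The exchange computation then flips sign because $r^a$ is decreasing, so it is plausibly workable. But you have not supplied it, and the convergence of the step approximations must handle the singular weight $(\mathcal Pc)^a$ at the poles.

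\textbf{The regularity claim is unjustified.} ``$\theta^*\in W^{1,\infty}$ by admissibility'' does not follow. With a nontrivial weight, which is always the case in your version since $n-\ell\ge2$, $\vartheta$ need not be Lipschitz in $\tau$. The paper needs Lemma~\ref{lem:weighted-Lipschitz-preservation} (coarea formula plus the level-set bound $R(t)^{m-1}\le\sum_ix(s_i)^{m-1}$) to obtain a $C^{1,1}$ boundary. That regularity is what lets Proposition~\ref{prop:C11-Steiner-compatibility} identify the comparison body's curvature integrals with its quermassintegrals, so that the convex inequality applies.
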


The following corollary is an immediate reformulation of the inequality \eqref{eq:axiconvex-curvature-inequality}, with the volume term supplied by the Euclidean
isoperimetric inequality.

\begin{corollary}
\label{cor:complete-W-chain}
Let $\Omega$ be the bounded domain enclosed by an axiconvex hypersurface $\Sigma$ in
\cref{thm:closed-high-dim}, and let
$\omega_{n+1}=|B^{n+1}|$.  Define
\[
 W_0(\Omega)=|\Omega|,
 \qquad
 W_i(\Omega)=\frac1{n+1}\mathcal M_{i-1}(\Sigma),
 \quad 1\le i\le n.
\]
Then, for $0\le i<j\le n$,
\begin{equation}\label{eq:W-chain}
	\left(\frac{W_i(\Omega)}{\omega_{n+1}}\right)^{1/(n+1-i)}
	\le
	\left(\frac{W_j(\Omega)}{\omega_{n+1}}\right)^{1/(n+1-j)}.
\end{equation}
Equality holds if and only if $\Omega$ is a ball.
\end{corollary}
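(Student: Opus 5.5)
The corollary is a re-indexing of \cref{thm:closed-high-dim} together with the isoperimetric inequality, so I would split into the cases $i\ge1$ and $i=0$. First I record the normalization. Since $\int_{\Sn^n}H_j\,dA=|\Sn^n|$ and $|\Sn^n|=(n+1)\omega_{n+1}$, we get
\[
 \frac{W_i(\Omega)}{\omega_{n+1}}=\frac{\mathcal M_{i-1}(\Sigma)}{|\Sn^n|}\qquad(1\le i\le n).
\]
Moreover $n+1-i=n-(i-1)$. Hence for $1\le i<j\le n$, \eqref{eq:W-chain} is exactly \eqref{eq:axiconvex-curvature-inequality} with $\ell=i-1$ and $k=j-1$, which satisfy $0\le\ell<k\le n-1$. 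The equality statement also transfers directly: $\Sigma$ is a round sphere if and only if $\Omega$ is a ball.

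For $i=0$ I would first prove the inequality with $j=1$, which is the isoperimetric inequality. Indeed $W_1=|\Sigma|/(n+1)$, so
\[
 \left(\frac{|\Omega|}{\omega_{n+1}}\right)^{1/(n+1)}\le\left(\frac{|\Sigma|}{|\Sn^n|}\right)^{1/n}
\]
is the classical Euclidean isoperimetric inequality. Its hypotheses hold because $\Omega$ is a bounded domain with $C^{1,1}$ (hence Lipschitz) boundary, so the perimeter equals $|\Sigma|$. Equality holds only for balls. For $j\ge2$ I then chain: the $j=1$ case gives the inequality from $W_0$ to $W_1$, and the case already proved (with $i=1$) gives the inequality from $W_1$ to $W_j$.

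For equality when $i=0<j$, both links in the chain must be equalities, and the first link forces $\Omega$ to be a ball. Conversely, a ball gives equality throughout. No step is a genuine obstacle. The only care needed is to check the normalization constant $|\Sn^n|=(n+1)\omega_{n+1}$ and that $C^{1,1}$ regularity suffices for the isoperimetric inequality and its rigidity. This holds via De Giorgi's theorem for sets of finite perimeter, whose perimeter equals $\mathcal H^n(\Sigma)$ here.
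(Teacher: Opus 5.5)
Your proposal is correct and follows the paper's proof. For $i\ge1$ you re-index with $\ell=i-1$, $k=j-1$ via $|\Sn^n|=(n+1)\omega_{n+1}$. For $i=0$ you chain the Euclidean isoperimetric inequality ($W_0$ to $W_1$) with the $W_1$-to-$W_j$ case, and the equality case follows from the rigidity of each link.
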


The proof of \cref{thm:closed-high-dim} is based on a convex comparison principle.
We construct a convex body $K_k$ with $C^{1,1}$ boundary such that
\[
\mathcal M_k(\partial K_k)=\mathcal M_k(\Sigma),
\qquad
\mathcal M_\ell(\partial K_k)\ge\mathcal M_\ell(\Sigma)
\quad(0\le\ell<k).
\]
Thus the convex body preserves the prescribed curvature integral and
simultaneously increases every lower-order integral.  The classical
inequalities for $K_k$ then give the inequalities on $\Sigma$.

Extending the construction of \cite{DHMT} to higher dimensions involves two main difficulties. The curvature formula naturally selects the weighted parameter \mbox{$\dd\tau=x^{n-k-1}\dd s$}, in which $\mathcal M_k$ 
depends only on the distribution of the tangent angle and is
therefore preserved by increasing rearrangement.  The weight is
trivial when $k=n-1$, as in the three-dimensional case, but is otherwise nonconstant.  First, preserving $\mathcal M_k$ does not directly yield the comparisons of $\mathcal M_\ell$.  In the weighted coordinate, $\mathcal M_\ell$ contains an additional factor $x^{k-\ell}$,
which becomes a power of the cosine primitive.
For $\ell\ge1$, this factor is coupled to an angle-dependent coefficient.  Since rearrangement changes both factors,
the increase of the primitive alone, which suffices for the area
comparison, does not yield the required comparisons of $\mathcal M_\ell$.  We therefore establish a primitive rearrangement inequality adapted to these coefficients.  
Second, the weighted change of variables degenerates at the poles
when $k<n-1$.  Although the original angle is Lipschitz in arclength,
it need not be Lipschitz in $\tau$, so Lipschitz preservation
under rearrangement cannot be applied directly.
We instead estimate the growth of the weighted angle distribution
by the coarea formula and compare the reconstructed radius with
the radii on the original level sets.  This comparison compensates
for the singular factor introduced when arclength is restored.  Thus we recover Lipschitz control in the reconstructed arclength. This yields a comparison body
with $C^{1,1}$ boundary.  Its curvature
integrals therefore have the classical quermassintegral interpretation
needed to apply the convex inequalities.

Our second result removes the axiconvexity assumption.  Following the folding construction of \cite{DHMT}, set
$\bar\theta=\arccos(\cos\theta)$ and define
\[
\begin{gathered}
	\varepsilon_\theta:=\operatorname{sgn}(\sin\theta),
	\qquad \operatorname{sgn}(0)=0,\\
	\widehat H_0:=1,
	\qquad \widehat H_j:=\varepsilon_\theta^jH_j
	\quad(1\le j\le n).
\end{gathered}
\]
The resulting axiconvex hypersurface $\bar\Sigma$ satisfies
$\bar H_j=\widehat H_j$ almost everywhere and $d\bar A=dA$, so the curvature integrals of $\bar\Sigma$
coincide with the sign-corrected integrals of $\Sigma$.
Applying \cref{thm:closed-high-dim} therefore gives the
following extension to the full admissible class.

\begin{theorem}
\label{thm:folded-homogeneous-chain}
Let $\Sigma^n\subset\R^{n+1}$ be an admissible $C^{1,1}$ axisymmetric hypersurface.  Define
\begin{equation*}
 \widehat{\mathcal M}_j(\Sigma)
 :=\int_\Sigma \widehat H_j\dd A,
 \quad 0\le j\le n-1.
\end{equation*}
Then $\widehat{\mathcal M}_j(\Sigma)>0$,
and for $0\le\ell<k\le n-1$,
\begin{equation}\label{eq:folded-complete-chain}
 \left(
   \frac{\widehat{\mathcal M}_\ell(\Sigma)}{|\Sn^n|}
 \right)^{1/(n-\ell)}
 \le
 \left(
   \frac{\widehat{\mathcal M}_k(\Sigma)}{|\Sn^n|}
 \right)^{1/(n-k)}.
\end{equation}
Equality holds if and only if $\Sigma$ is a
round sphere.
\end{theorem}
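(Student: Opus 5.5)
We reduce \cref{thm:folded-homogeneous-chain} to \cref{thm:closed-high-dim} by folding the tangent angle, as the discussion before the statement indicates. Given the admissible profile $\gamma=(x,z)$ with angle lift $\theta\in W^{1,\infty}(0,L)$, set $\bar\theta=\arccos(\cos\theta)\in[0,\pi]$. Since $\theta\mapsto\arccos(\cos\theta)$ is $1$-Lipschitz, $\bar\theta\in W^{1,\infty}$, and $|\dot{\bar\theta}|=|\dot\theta|$ a.e. Moreover $\dot{\bar\theta}=\varepsilon_\theta\dot\theta$ a.e. on $\{\sin\theta\ne0\}$, and on $\{\sin\theta=0\}$ both derivatives vanish a.e. The boundary values $\theta(0)=0$, $\theta(L)=\pi$ give $\bar\theta(0)=0$ and $\bar\theta(L)=\pi$. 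Define the folded meridian by
\[
\bar x(s)=\int_0^s\cos\bar\theta=x(s),\qquad \bar z(s)=\int_0^s\sin\bar\theta=\int_0^s|\sin\theta|\ge0 .
\]
Because $\bar x=x$, we still have $\bar x>0$ on $(0,L)$ and $\bar x(L)=0$. Also $\bar z(L)>0$, since $\sin\theta$ cannot vanish identically, as $\theta$ must pass from $0$ to $\pi$. Finally $\bar z$ is nondecreasing, so the folded curve is injective: two points with equal $\bar z$ lie on an interval where $\sin\bar\theta=0$, and there $\bar x$ is strictly monotone unless $\cos\bar\theta=0$, which is impossible on that set. Hence $\bar\gamma$ generates an axiconvex hypersurface $\bar\Sigma$ in the admissible $C^{1,1}$ class.

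Next we compare curvatures. For the rotation hypersurface, the principal curvatures are the meridian curvature $\kappa_1=\dot\theta$ and the $n-1$ azimuthal curvatures $\kappa_\theta=\sin\theta/x$, up to a fixed orientation convention. Therefore
\[
\binom nj H_j=\binom{n-1}{j}\kappa_\theta^{\,j}+\binom{n-1}{j-1}\dot\theta\,\kappa_\theta^{\,j-1}.
\]
For $\bar\Sigma$ we have $\bar\kappa_\theta=|\sin\theta|/x=\varepsilon_\theta\kappa_\theta$ and $\dot{\bar\theta}=\varepsilon_\theta\dot\theta$ a.e. where $\sin\theta\neq0$, so $\bar H_j=\varepsilon_\theta^{\,j}H_j=\widehat H_j$ there. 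Where $\sin\theta=0$, every term with $j\ge1$ carries either a factor $\kappa_\theta=0$ or $\dot\theta=0$ a.e., since $\kappa_\theta^{\,j-1}\dot\theta$ vanishes a.e. (for $j=1$ because $\dot\theta=0$ a.e. on a level set of $\theta$). So the identity $\bar H_j=\widehat H_j$ holds a.e. for every $j$. The area element is $dA=x^{n-1}\,ds\,d\sigma_{\Sn^{n-1}}$, which is unchanged, so $\widehat{\mathcal M}_j(\Sigma)=\mathcal M_j(\bar\Sigma)$ for all $0\le j\le n-1$.

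We now apply \cref{thm:closed-high-dim} to $\bar\Sigma$, which gives \eqref{eq:folded-complete-chain}. Positivity follows from the same theorem: chaining down to $\ell=0$ gives $\mathcal M_k(\bar\Sigma)\ge|\Sn^n|(|\bar\Sigma|/|\Sn^n|)^{(n-k)/n}>0$. Alternatively, it follows from the convex comparison body $K_k$ with $\mathcal M_k(\partial K_k)=\mathcal M_k(\bar\Sigma)$, since $\mathcal M_k(\partial K_k)>0$ for a convex body with nonempty interior.

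For equality, \cref{thm:closed-high-dim} forces $\bar\Sigma$ to be a round sphere. Then $\bar\theta=\pi s/L$ is strictly increasing. We must show $\theta=\bar\theta$, i.e. $\sin\theta\ge0$. Since $\cos\theta=\cos\bar\theta$ is strictly decreasing and $\theta$ is continuous with $\theta(0)=0$, the lift satisfies $\theta=\pm\bar\theta+2\pi m$ locally. Switching branches can only occur where $\bar\theta\in\{0,\pi\}$, which happens only at $s=0$ or $s=L$. Hence $\theta\equiv\bar\theta$ on $(0,L)$, so $\Sigma=\bar\Sigma$ is a round sphere; the branch $\theta=-\bar\theta$ is excluded by $\theta(L)=\pi$. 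The main point of care is this a.e. curvature identity on the set $\{\sin\theta=0\}$ together with the injectivity and admissibility of the folded curve. Everything else is inherited from \cref{thm:closed-high-dim}.
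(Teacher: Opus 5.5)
Your proposal is correct and follows essentially the same route as the paper: fold the angle via $\bar\theta=\arccos(\cos\theta)$, use $\bar x=x$ and $\dot{\bar\theta}=\varepsilon_\theta\dot\theta$ a.e.\ to obtain $\mathcal M_j(\bar\Sigma)=\widehat{\mathcal M}_j(\Sigma)$, apply \cref{thm:closed-high-dim} to the axiconvex $\bar\Sigma$, and unfold in the equality case.

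The differences are minor:
\begin{itemize}
\item You give a self-contained injectivity check instead of citing \cite[Proposition~3.1]{DHMT}.
\item You rule out $\theta=-\bar\theta$ via $\theta(L)=\pi$ rather than via $z(L)>0$.
\item For positivity, rely on your second argument via $K_k$, or on the paper's observation that $F_j>0$ on $(0,\pi]$. Reading positivity off the chain presupposes that the fractional powers are already defined.
\item The converse equality direction is immediate: a round $\Sigma$ is axiconvex, so $\bar\Sigma=\Sigma$.
\end{itemize}
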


For even orders, $\widehat H_j=H_j$ almost everywhere.
\Cref{thm:folded-homogeneous-chain} yields sharp inequalities
between curvature integrals of even orders,
without assuming axiconvexity.

\begin{corollary}
\label{cor:even-order-chain}
Let $\Sigma^n\subset\R^{n+1}$ be an admissible $C^{1,1}$ axisymmetric hypersurface.
Then, for every pair of even integers $0\le\ell<k\le n-1$,
\begin{equation}\label{eq:even-order-curvature-chain}
 \left(\frac{\mathcal M_\ell(\Sigma)}{|\Sn^n|}\right)^{1/(n-\ell)}
 \le
 \left(\frac{\mathcal M_k(\Sigma)}{|\Sn^n|}\right)^{1/(n-k)}.
\end{equation}
Moreover, for every even $\ell$ and every $k$ with
$0\le\ell<k\le n-1$,
\begin{equation}\label{eq:even-absolute-chain}
	\left(\frac{\int_\Sigma H_\ell\dd A}{|\Sn^n|}\right)^{1/(n-\ell)}
	\le
	\left(\frac{\int_\Sigma|H_k|\dd A}{|\Sn^n|}\right)^{1/(n-k)}.
\end{equation}
Equality in \eqref{eq:even-order-curvature-chain} or \eqref{eq:even-absolute-chain} holds if and only if $\Sigma$ is a
round sphere.
\end{corollary}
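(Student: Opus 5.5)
The corollary will follow from \cref{thm:folded-homogeneous-chain} by comparing $\widehat{\mathcal M}_j(\Sigma)$ with $\mathcal M_j(\Sigma)$. The basic observation is pointwise. Where $\varepsilon_\theta\neq0$ we have $\varepsilon_\theta^j=1$ for even $j$, so $\widehat H_j=H_j$ there. On the set $\{\sin\theta=0\}$ the azimuthal principal curvatures $\sin\theta/x$ vanish. The meridian curvature is then the only possibly nonzero principal curvature, so $\sigma_j=0$ for $j\ge2$. For $j=0$ both sides equal $1$. Hence for even $j$ we get $\widehat H_j=H_j$ almost everywhere, which is the remark already made before the corollary. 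For arbitrary $j$ we get $|\widehat H_j|=|\varepsilon_\theta|^j|H_j|\le|H_j|$ almost everywhere, for all $j\ge1$.

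\textbf{Proof of \eqref{eq:even-order-curvature-chain}.} If $\ell,k$ are both even, then $\widehat{\mathcal M}_\ell(\Sigma)=\mathcal M_\ell(\Sigma)$ and $\widehat{\mathcal M}_k(\Sigma)=\mathcal M_k(\Sigma)$. Inequality \eqref{eq:folded-complete-chain} then becomes \eqref{eq:even-order-curvature-chain} verbatim. Positivity of these quantities is part of \cref{thm:folded-homogeneous-chain}, so the roots make sense. The equality case is inherited directly.

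\textbf{Proof of \eqref{eq:even-absolute-chain}.} Take $\ell$ even and $k$ arbitrary. The left-hand side equals $\bigl(\widehat{\mathcal M}_\ell(\Sigma)/|\Sn^n|\bigr)^{1/(n-\ell)}$. By \cref{thm:folded-homogeneous-chain}, this is at most $\bigl(\widehat{\mathcal M}_k(\Sigma)/|\Sn^n|\bigr)^{1/(n-k)}$. Next,
\[
0<\widehat{\mathcal M}_k(\Sigma)=\int_\Sigma\varepsilon_\theta^kH_k\dd A\le\int_\Sigma|H_k|\dd A.
\]
Since $t\mapsto t^{1/(n-k)}$ is increasing on $(0,\infty)$, the chain closes.

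\textbf{Equality in \eqref{eq:even-absolute-chain}.} Equality forces equality in \eqref{eq:folded-complete-chain}, so $\Sigma$ is a round sphere by \cref{thm:folded-homogeneous-chain}. Conversely, on a round sphere $H_k\equiv r^{-k}>0$, so all the inequalities are equalities. This step needs no extra work, since we never use equality in the middle inequality.

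The only point needing care is the pointwise identity on $\{\sin\theta=0\}$ in the $C^{1,1}$ (a.e.) setting. The identity $\widehat H_j=H_j$ there must hold for even $j\ge2$ and not merely for $j=0$. I would justify it from the explicit a.e. formula for the principal curvatures of the profile: one curvature $\dot\theta$ and $n-1$ equal to $\sin\theta/x$. This gives
\[
H_j=\binom nj^{-1}\left[\binom{n-1}{j-1}\dot\theta\Bigl(\frac{\sin\theta}{x}\Bigr)^{j-1}+\binom{n-1}{j}\Bigl(\frac{\sin\theta}{x}\Bigr)^{j}\right],
\]
which vanishes where $\sin\theta=0$ for $j\ge2$, so both sides are zero there. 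Everything else is immediate.
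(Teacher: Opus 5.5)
Your proposal is correct and follows essentially the same route as the paper. Both arguments use $\widehat H_j=H_j$ a.e.\ for even $j$, which holds because the azimuthal curvatures vanish on $\{\sin\theta=0\}$. Both then apply \cref{thm:folded-homogeneous-chain} together with the bound $\widehat H_k\le|H_k|$, and inherit the equality cases from the folded chain.
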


It is worth noting that Dalphin--Henrot--Masnou--Takahashi \cite[Theorem~1.2]{DHMT}
constructed embedded axisymmetric $C^{1,1}$ surfaces
of fixed area whose total mean curvature tends to $0^+$ in $\R^3$. Chodosh--Eichmair--Koerber \cite{CEK}
further constructed smooth axisymmetric counterexamples to
the Minkowski inequality that converge to a sphere
in $C^1\cap W^{2,p}$ for every $1\le p<\infty$.
Thus, axisymmetry does not ensure the Minkowski
inequality, even close to a sphere in these
topologies. More generally, Li--Wan--Zhang \cite[Theorem~1.2(i)]{LWZ}
constructed admissible axisymmetric hypersurfaces of fixed
area whose total $j$-th mean curvature tends to $0$
for every odd $1\le j\le n-1$.
Thus the corresponding area inequality cannot hold throughout
the admissible class with the uncorrected odd-order curvature integral. By contrast, \cref{cor:even-order-chain} establishes the corresponding even-order inequalities in higher dimensions for every axisymmetric hypersurface in the admissible class.

Finally, taking $\ell=0$ in \eqref{eq:even-absolute-chain} gives
a sharp bound for area in terms of $\int_\Sigma |H_k|\dd A$, recovering
\cite[Theorem~1.3]{DHMT} when $n=2$ and $k=1$.
For $k=1$, an estimate without symmetry assumptions follows
from the Michael--Simon inequality \cite{MichaelSimon}, with
a nonsharp constant. Brendle \cite{Brendle2021} established a sharp Sobolev inequality
for hypersurfaces, possibly with boundary, using the ABP method. Its sharp constant is determined
by flat disks and does not give the optimal spherical constant in our setting. Glaudo \cite{Glaudo} obtained a sharp estimate
involving the positive part $H_1^+:=\max\{H_1,0\}$ for axisymmetric hypersurfaces sufficiently
$C^1$-close to a sphere. Wang--Yang \cite{WangYang} established
related stability estimates at higher orders for nearly
spherical axisymmetric hypersurfaces with negative-part
curvature corrections. 
Recently, Li--Wan--Zhang \cite[Theorems~1.1 and~1.3]{LWZ}
established the endpoint case $\ell=0$, $k=n-1$ of
\cref{thm:closed-high-dim,cor:even-order-chain}.
Their construction uses unweighted arclength rearrangement,
whereas ours combines weighted angle rearrangement with
arclength reconstruction to treat every $1\le k\le n-1$.

The paper is organized as follows. \Cref{sec:prelim} presents the geometric preliminaries and
rearrangement tools.
\Cref{sec:weighted-form} derives the weighted one-dimensional
representations of the curvature functionals.
\Cref{sec:rearrangement} constructs the convex comparison body,
establishes its $C^{1,1}$ regularity, and proves the primitive
rearrangement and curvature comparison inequalities.
Finally, \cref{sec:proofs} proves the main results and uses tangent-angle folding to obtain the sign-corrected
chain and its even-order consequences.

\section{Preliminaries}\label{sec:prelim}
Let $\Sigma=\partial\Omega$ be admissible in the sense of
\cref{def:admissible-axisymmetric}, with arclength profile $\gamma(s)=(x(s),z(s))$ and tangent angle
$\theta\in W^{1,\infty}(0,L)$.  Thus $\dot x=\cos\theta$, $\dot z=\sin\theta$.  
The axisymmetric parametrization
\[
(s,\omega)\longmapsto(x(s)\omega,z(s)),
\qquad \omega\in\Sn^{n-1},
\]
induces the metric $\dd s^2+x(s)^2g_{\Sn^{n-1}}$.  Hence
\begin{equation}\label{eq:area-element-prelim}
	\dd A=x^{n-1}\dd\omega\dd s,
	\qquad
	|\Sigma|=\sphereareanm\int_0^L x^{n-1}\dd s .
\end{equation}
We use
the outward unit normal $\nu=(\sin\theta\,\omega,-\cos\theta)$ and the
Weingarten convention $A=D\nu$, so that convex principal
curvatures are nonnegative. For almost every $s\in(0,L)$, the principal curvatures are
\[
  \kappa_1=\dot\theta,\qquad
  \kappa_2=\cdots=\kappa_n=\frac{\sin\theta}{x}.
\]
We use the normalized
elementary symmetric curvatures
\[
  H_k=\binom nk^{-1}\sigma_k(\kappa_1,\ldots,\kappa_n),
  \qquad 0\le k\le n,
\]
so that $H_0=1$ and
$H_1=\frac1n(\kappa_1+\cdots+\kappa_n)$.
Consequently, for $1\le k \le n$,
\begin{equation}\label{eq:H-integrand-prelim}
  H_k\dd A
  =
  \frac1n\left(
    k\dot\theta\,\sin^{k-1}\theta\,x^{n-k}
    +(n-k)\sin^k\theta\,x^{n-k-1}
  \right)\dd\omega\dd s .
\end{equation}

\begin{definition}\label{def:axiconvex}
	An admissible hypersurface $\Sigma$ as in
	\cref{def:admissible-axisymmetric} is called \emph{axiconvex}
	if its tangent angle satisfies
	$0\le\theta(s)\le\pi$ for all $s\in[0,L]$.
	Equivalently, its height function $z$ is nondecreasing.
	Within this class, $\Sigma$ is convex if and only if
	$\theta$ is nondecreasing.
\end{definition}

The following example shows that axiconvexity implies neither
convexity nor the standard G{\aa}rding-cone conditions.
In particular, the axiconvex class is not contained in the class
of star-shaped $k$-convex hypersurfaces.

\begin{example}
\label{ex:axiconvex-outside-gamma}
For $n\ge2$, define 
\begin{equation}\label{eq:explicit-non-k-convex-profile}
 x_n(t)=\sin t-\frac12\sin^{4n-1}t,
 \qquad
 z_n(t)=1-\cos t,
 \qquad
0\le t\le\pi.
\end{equation}
Consider the axisymmetric hypersurface parametrized by
\[
 (t,\omega)\longmapsto(x_n(t)\omega,z_n(t)),
 \qquad \omega\in\Sn^{n-1}.
\]
Since $x_n\ge\frac12\sin t$ and $z_n'=\sin t>0$ on $(0,\pi)$,
the profile is injective.
Put
\[
 v(t)=\sqrt{x_n'(t)^2+z_n'(t)^2}.
\]
If $\theta$ denotes its tangent angle, then
\[
 \sin\theta=\frac{z_n'}{v}>0\quad(0<t<\pi),
 \qquad \theta(0)=0,\qquad\theta(\pi)=\pi.
\]
Thus the resulting closed embedded axisymmetric hypersurface is
axiconvex.
 At $t=\pi/2$, the principal
curvatures are
\[
\kappa_1=-\frac{4n-3}{2},\qquad \kappa_2=\cdots=\kappa_n=2.
\]
For every $1\le k\le n$,
\[
-\frac{4n-3}{2}+\frac{2(n-k)}{k}
\le -\frac{4n-3}{2}+2(n-1)
=-\frac12<0.
\]
Consequently, 
\[
\sigma_k=\binom{n-1}{k-1}2^{k-1}
\left(-\frac{4n-3}{2}+\frac{2(n-k)}{k}\right)<0.
\]
Thus its principal curvature vector lies outside every
$\overline\Gamma_k$ at its equator, where $\Gamma_k =\{\kappa\in\R^n:\sigma_j (\kappa)>0, 1\le j\le k\}$ and $\overline\Gamma_k$ denotes its closure.
\end{example}

The admissible axiconvex meridians of
\cref{def:admissible-axisymmetric} can equivalently be described
by an angle $\theta\in W^{1,\infty}(0,L)$ taking values in
$[0,\pi]$ and satisfying
\begin{equation}\label{eq:admissible-endpoints}
	\theta(0)=0,\qquad \theta(L)=\pi,
\end{equation}
\begin{equation}\label{eq:admissible-closure}
	\int_0^L\cos\theta(t)\,dt=0,
	\qquad
	\int_0^L\sin\theta(t)\,dt>0,
\end{equation}
\begin{equation}\label{eq:admissible-positive-radius}
	\int_0^s\cos\theta(t)\,dt>0
	\qquad\text{for every }0<s<L.
\end{equation}
Indeed, the corresponding meridian is
\[
\gamma(s)=(x(s),z(s))
=\left(
\int_0^s\cos\theta(t)\,dt,\,
\int_0^s\sin\theta(t)\,dt
\right).
\]
Thus \eqref{eq:admissible-closure} and
\eqref{eq:admissible-positive-radius} express precisely
$x(0)=x(L)=0$, $x>0$ on $(0,L)$, and $z(L)>z(0)=0$.
Conversely, these formulae reconstruct a $C^{1,1}$ meridian
with unit speed, and its injectivity follows from
\cite[Proposition~3.1]{DHMT}.
The axisymmetric parametrization and tangent angle convention are
summarized in \cref{fig:high-dim-axiconvex}.

\begin{figure}[!t]
  \centering
  \begingroup
  \fontsize{10}{12}\selectfont
  \renewcommand{\footnotesize}{\fontsize{8}{9.5}\selectfont}
  \setlength{\fboxsep}{4pt}
  \setlength{\fboxrule}{0pt}
  \resizebox{0.92\textwidth}{!}{\fbox{%
\begin{tikzpicture}[
  >=Latex,
  scale=0.92,
  axis/.style={->,line width=0.45pt,black},
  profile/.style={line width=1.25pt,blue!70!black},
  backprofile/.style={line width=0.65pt,blue!35!gray},
  fiberfront/.style={draw=black!58,line width=0.35pt},
  fiberback/.style={draw=black!42,dashed,line width=0.30pt},
  note/.style={font=\footnotesize},
  declare function={
    xr(\u)=2.40*sin(180*\u)*(1+0.22*sin(180*\u)*sin(180*\u)*sin(720*\u-55));
    zr(\u)=6.20*(10*\u*\u*\u-15*\u*\u*\u*\u+6*\u*\u*\u*\u*\u);
    dxr(\u)=2.40*(pi*cos(180*\u)*(1+0.22*sin(180*\u)*sin(180*\u)*sin(720*\u-55))
      +sin(180*\u)*0.22*(2*pi*sin(180*\u)*cos(180*\u)*sin(720*\u-55)
      +4*pi*sin(180*\u)*sin(180*\u)*cos(720*\u-55)));
    dzr(\u)=186.0*\u*\u*(1-\u)*(1-\u);
  }
]

\node[font=\bfseries] at (0.30,7.22)
  {coordinate picture in $\mathbb R^n\times\mathbb R$};

\draw[axis] (0,0) -- (3.75,0) node[below] {$e_1$};
\draw[axis] (0,0) -- (-2.05,-0.54) node[pos=0.72,above,note] {$e_2$};
\node[note,anchor=west] at (2.35,0.34) {$\mathbb R^n\times\{0\}$};
\draw[axis] (0,-0.10) -- (0,6.80) node[right] {$z$};
\node[note] at (0.13,-0.30) {$0$};

\path[fill=blue!14,opacity=0.44]
  plot[smooth,variable=\t,domain=0:1,samples=160] ({xr(\t)},{zr(\t)})
  --
  plot[smooth,variable=\t,domain=1:0,samples=160] ({-xr(\t)},{zr(\t)})
  -- cycle;

\foreach \tt in {0.14,0.30,0.48,0.66,0.84} {
  \pgfmathsetmacro{\rx}{xr(\tt)}
  \pgfmathsetmacro{\zz}{zr(\tt)}
  \pgfmathsetmacro{\ry}{0.18*\rx}
  \path[fill=blue!8,opacity=0.50] (0,\zz) ellipse [x radius=\rx,y radius=\ry];
  \draw[fiberback] (-\rx,\zz) arc [start angle=180,end angle=0,
    x radius=\rx,y radius=\ry];
  \draw[fiberfront] (\rx,\zz) arc [start angle=0,end angle=-180,
    x radius=\rx,y radius=\ry];
}

\draw[backprofile]
  plot[smooth,variable=\t,domain=0:1,samples=160] ({-xr(\t)},{zr(\t)});
\draw[profile]
  plot[smooth,variable=\t,domain=0:1,samples=160] ({xr(\t)},{zr(\t)});
\fill (0,0) circle (1.4pt);
\fill (0,6.20) circle (1.4pt);

\pgfmathsetmacro{\tp}{0.58}
\pgfmathsetmacro{\px}{xr(\tp)}
\pgfmathsetmacro{\pz}{zr(\tp)}
\pgfmathsetmacro{\angp}{atan2(dzr(\tp),dxr(\tp))}
\coordinate (P) at (\px,\pz);
\fill[blue!70!black] (P) circle (1.5pt);
\draw[->,line width=0.4pt] (0,\pz) -- (P)
  node[midway,above,note] {$x(s)$};
\draw[->,profile] (P) -- ++({0.72*cos(\angp)},{0.72*sin(\angp)})
  ;
\draw[blue!70!black,line width=0.35pt]
  ($(P)+(0.42,0)$) arc [start angle=0,end angle=\angp,radius=0.42];
\node[note,anchor=west] at ($(P)+(0.55,0.46)$) {$\theta(s)$};
\node[note] at ($(P)+(0.52,-0.45)$) {$\gamma(s)$};
\draw[line width=0.35pt] ($(P)+(0.20,0.18)$) -- (4.18,4.25);
\node[anchor=west,note] at (4.26,4.25) {fiber $\mathbb S^{n-1}_{x(s)}$};

\node[note,align=center] at (0.15,-1.03)
  {$\Sigma^n=\{(x(s)\omega,z(s))\},\quad \omega\in\mathbb S^{n-1}$};

\begin{scope}[xshift=7.15cm]
  \node[font=\bfseries] at (1.95,7.22) {meridian profile};
  \draw[axis] (0,0) -- (0,6.80) node[left] {$z$};
  \draw[axis] (0,0) -- (4.60,0) node[below] {$x$};

  \draw[profile]
    plot[smooth,variable=\t,domain=0:1,samples=160] ({xr(\t)},{zr(\t)});

  \fill (0,0) circle (1.3pt) node[below left,note] {$0$};
  \fill (0,6.20) circle (1.3pt);

  \pgfmathsetmacro{\tq}{0.58}
  \pgfmathsetmacro{\qx}{xr(\tq)}
  \pgfmathsetmacro{\qz}{zr(\tq)}
  \pgfmathsetmacro{\angq}{atan2(dzr(\tq),dxr(\tq))}
  \coordinate (Q) at (\qx,\qz);
  \draw[dashed] (Q) -- (\qx,0);
  \draw[dashed] (0,\qz) -- (Q);
  \node[note,above] at (0.50*\qx,\qz) {$x(s)$};
  \node[note,right] at (\qx,0.50*\qz) {$z(s)$};
  \draw[->,black!50,line width=0.30pt] (Q) -- ++(0.70,0)
    node[pos=0.76,below,note] {$+x$};
  \draw[->,profile] (Q) -- ++({0.72*cos(\angq)},{0.72*sin(\angq)})
    ;
  \draw[blue!70!black,line width=0.35pt]
    ($(Q)+(0.42,0)$) arc [start angle=0,end angle=\angq,radius=0.42];
  \node[note,anchor=west] at ($(Q)+(0.58,0.45)$) {$\theta(s)$};

  \node[note,anchor=west] at (-0.25,-0.93)
    {$\gamma(s)=(x(s),z(s)),\quad 0\le\theta(s)\le\pi$};
\end{scope}

\end{tikzpicture}%
  }}
  \endgroup
  \caption{The axisymmetric axiconvex hypersurface $\Sigma$ and its generating meridian $\gamma(s)$}
  \label{fig:high-dim-axiconvex}
\end{figure}

We next relate curvature integrals to the classical convex quermassintegrals.
Write $\mathcal M_k(\Sigma)=\int_\Sigma H_k\dd A$, with $H_0=1$.
For a convex body
$K\subset\R^{n+1}$ with nonempty interior, define the
quermassintegrals by the Steiner formula
\[
|K+tB^{n+1}|=\sum_{j=0}^{n+1}\binom{n+1}{j}W_j(K)t^j.
\]
Here $B^{n+1}$ is the unit ball.

\begin{proposition}
\label{prop:C11-Steiner-compatibility}
If $K\subset\R^{n+1}$ is a convex body with $C^{1,1}$ boundary, then
for every $0\le k\le n$,
\begin{equation}\label{eq:C11-convex-Steiner-identification}
 (n+1)W_{k+1}(K)=\int_{\partial K}H_k\dd A.
\end{equation}
\end{proposition}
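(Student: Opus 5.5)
The plan is to compare the Steiner polynomial with an explicit formula for the volume of parallel bodies. For $t>0$ this formula comes from the normal map, and it is valid whenever $\partial K$ is only $C^{1,1}$. Let $\nu:\partial K\to\Sn^n$ be the outward unit normal. Since $\partial K$ is $C^{1,1}$, $\nu$ is Lipschitz. By Rademacher's theorem it is differentiable $\mathcal H^n$-almost everywhere on $\partial K$, and there the Weingarten map $A=D\nu$ is defined. By convexity $A$ is symmetric and positive semidefinite almost everywhere, with eigenvalues $\kappa_1,\dots,\kappa_n\ge0$.

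Next consider the Lipschitz map
\[
F:\partial K\times(0,t)\to (K+tB^{n+1})\setminus K,\qquad F(p,r)=p+r\nu(p).
\]
Because $K$ is convex, the metric projection onto $K$ is well defined and single-valued. Every point $y\notin K$ is therefore written uniquely as $p+r\nu(p)$, where $p$ is the nearest point of $K$ to $y$ and $r=\operatorname{dist}(y,K)$. Here we use that the normal cone at each boundary point of a $C^1$ convex body is the ray spanned by $\nu(p)$. Hence $F$ is a bijection. At points where $\nu$ is differentiable, the Jacobian is
\[
\det(I+rA)=\prod_{i=1}^n(1+r\kappa_i)=\sum_{k=0}^n\binom nk H_k\,r^k .
\]
The area formula for Lipschitz maps, valid for injective maps without any smoothness of the Jacobian, then gives
\[
|K+tB^{n+1}|-|K|=\int_0^t\int_{\partial K}\sum_{k=0}^n\binom nk H_k r^k\dd A\dd r
=\sum_{k=0}^n\frac{1}{k+1}\binom nk t^{k+1}\int_{\partial K}H_k\dd A .
\]
This identity holds for all $t>0$. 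Both sides are polynomials in $t$, so we may compare coefficients of $t^{k+1}$ with the Steiner expansion. Since $\binom{n+1}{k+1}=\frac{n+1}{k+1}\binom nk$, this yields $(n+1)W_{k+1}(K)=\int_{\partial K}H_k\dd A$. The $t^0$ coefficient is $W_0=|K|$, consistent with the convention in the statement.

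The main obstacle is the measure-theoretic justification of the area formula in the low-regularity setting. The domain is the product of the $C^{1,1}$ manifold $\partial K$ with an interval, and we need the Jacobian of $F$ to equal $\det(I+rA)$ almost everywhere. This holds because $F$ is differentiable at $(p,r)$ whenever $\nu$ is differentiable at $p$, and such points form a full-measure set in the product by Fubini. Injectivity of $F$ also requires care: if $p+r\nu(p)=q+r'\nu(q)$, then uniqueness of the nearest point forces $p=q$ and $r=r'$. With these checks the coefficient comparison is routine. If one prefers to avoid the area formula, an alternative is to approximate $K$ by smooth convex bodies in $C^{1,\alpha}$ with uniformly bounded curvature and pass to the limit. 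However, the direct Lipschitz argument above is cleaner, and it is the route I would take.
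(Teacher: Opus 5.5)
Your proposal is correct. Your core computation is the same one the paper uses for smooth boundaries: expand the Jacobian $\det(I+rA)=\sum_k\binom nk H_k r^k$ of the normal map, integrate in the normal distance, and match coefficients with the Steiner polynomial. Where you differ is the passage to $C^{1,1}$. The paper does not redo the computation at low regularity. It observes that a closed $C^{1,1}$ hypersurface has positive reach (Scholtes, Federer) and then cites Federer's almost-everywhere curvature representation of the Steiner coefficients for sets of positive reach. You instead prove the needed facts directly. The normal $\nu$ is Lipschitz, so $A$ exists a.e. by Rademacher, and it is symmetric and positive semidefinite by convexity of the local $C^{1,1}$ graphs. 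The map $F(p,r)=p+r\nu(p)$ is Lipschitz with Jacobian $\det(I+rA)$ a.e. on $\partial K\times(0,t)$. Convexity makes $F$ globally injective: $K$ lies in the half-space $\{\langle x-p,\nu(p)\rangle\le0\}$, so $p$ is the metric projection of $p+r\nu(p)$. It would be worth stating this half-space fact explicitly, since your injectivity claim rests on it. In effect you prove the cited statement yourself in the convex case, using only Rademacher, the Lipschitz area formula and elementary convexity. The paper's route is shorter. It also sits in a framework (positive reach, curvature measures) that would give the local Steiner formula for nonconvex $C^{1,1}$ hypersurfaces when $t$ is below the reach, which is more than this proposition needs. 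One small imprecision: $F$ maps $\partial K\times(0,t)$ bijectively onto $\{0<\operatorname{dist}(\cdot,K)<t\}$, not onto $(K+tB^{n+1})\setminus K$. The difference is $\partial(K+tB^{n+1})$, which is Lebesgue-null, so your volume identity is unaffected.
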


\begin{proof}
If $\partial K$ is smooth, expansion of the normal Jacobian and
integration in the normal distance give
\[
  |K+tB^{n+1}|
  =
  |K|+
  \sum_{k=0}^n
    \frac{t^{k+1}}{k+1}\binom nk
    \int_{\partial K}H_k\dd A .
\]
Since
\[
 \binom{n+1}{k+1}
 =\frac{n+1}{k+1}\binom nk,
\]
comparison with the Steiner formula shows that
\[
 (n+1)W_{k+1}(K)=\int_{\partial K}H_k\dd A .
\]
The same identity holds for a convex body with $C^{1,1}$ boundary.
Indeed, $\partial K$ is a closed $C^{1,1}$ hypersurface and therefore has positive reach by
\cite[Theorem~1.3]{Scholtes}; see also
\cite[Remark~4.20]{Federer1959}.  The almost everywhere curvature
representation of the Steiner coefficients in
\cite[Corollary~5.3]{Federer1959} (see also
\cite[Lemma~A.1]{Scholtes}) then gives
\eqref{eq:C11-convex-Steiner-identification}.
\end{proof}

For a bounded domain $\Omega$ with compact $C^{1,1}$ boundary, define
its \emph{curvature quermassintegrals} by
\[
 W_0(\Omega)=|\Omega|,
 \qquad
 W_{k+1}(\Omega):=\frac1{n+1}\mathcal M_k(\partial\Omega)
\]
for $0\le k\le n$.  By \cref{prop:C11-Steiner-compatibility}, these
agree with the classical quermassintegrals when $\Omega$ is convex.
For convex bodies, the Alexandrov--Fenchel inequalities, in the quermassintegral form
\cite[Theorem~7.3.1 and (7.67)]{Schneider}, give 
\begin{equation}\label{eq:convex-AF-prelim}
 \left(\frac{\mathcal M_\ell(\partial K)}{|\Sn^n|}\right)^{1/(n-\ell)}
 \le
 \left(\frac{\mathcal M_k(\partial K)}{|\Sn^n|}\right)^{1/(n-k)}
 \qquad(0\le\ell<k\le n-1).
\end{equation}
Equality in \eqref{eq:convex-AF-prelim} holds only
for a ball; see \cite[Section~7.6]{Schneider}.  

Finally, we collect the classical rearrangement properties needed below;
see \cite[Sections~1.1--1.2]{Kesavan} and
\cite[Section~II.2]{Kawohl}.

\begin{proposition}
\label{prop:monotone-rearrangements}
Let $T>0$ and let $f,g\in L^\infty(0,T)$ be real-valued.  Write
$a_f=\operatorname*{ess\,inf}f$ and
$b_f=\operatorname*{ess\,sup}f$, and define, for $0\le\tau\le T$,
\begin{align}
 f^\downarrow(\tau)
 &:=\inf\bigl\{a\in[a_f,b_f]:
       |\{t\in(0,T):f(t)>a\}|\le\tau\bigr\},
 \label{eq:decreasing-rearrangement-definition}\\
 f^\uparrow(\tau)
 &:=\sup\bigl\{a\in[a_f,b_f]:
       |\{t\in(0,T):f(t)\ge a\}|\ge T-\tau\bigr\}.
 \label{eq:increasing-rearrangement-definition}
\end{align}
Then $f^\uparrow=-(-f)^\downarrow$, and the following hold.
\begin{enumerate}
\item
The functions $f^\downarrow$ and $f^\uparrow$ are respectively
nonincreasing and nondecreasing,
and equimeasurable with $f$:
\[
 |\{f^\downarrow>a\}|=|\{f^\uparrow>a\}|=|\{f>a\}|
 \qquad(a\in\R).
\]
For every Borel function $\Psi$ with $\Psi(f)\in L^1(0,T)$,
\begin{equation}
 \int_0^T\Psi(f)\dd t
 =\int_0^T\Psi(f^\downarrow)\dd\tau
 =\int_0^T\Psi(f^\uparrow)\dd\tau.
 \label{eq:rearrangement-functional-invariance}
\end{equation}

\item
The primitives satisfy
\begin{equation}
 \int_0^\tau f^\uparrow(r)\dd r
 \le\int_0^\tau f(r)\dd r
 \le\int_0^\tau f^\downarrow(r)\dd r
 \qquad(0\le\tau\le T),
 \label{eq:HL}
\end{equation}
with equality at $\tau=T$.

\item
Monotone rearrangement is an $L^1$ contraction:
\begin{equation}
 \begin{aligned}
 \|f^\downarrow-g^\downarrow\|_{L^1(0,T)}
 &\le\|f-g\|_{L^1(0,T)},\\
 \|f^\uparrow-g^\uparrow\|_{L^1(0,T)}
 &\le\|f-g\|_{L^1(0,T)}.
 \end{aligned}
 \label{eq:rearrangement-L1-contraction}
\end{equation}

\item
If $f\in C([0,T])$, then $f^\downarrow,f^\uparrow\in C([0,T])$, with
\[
 f^\uparrow(0)=f^\downarrow(T)=\min_{[0,T]}f,
 \qquad
 f^\uparrow(T)=f^\downarrow(0)=\max_{[0,T]}f.
\]
\end{enumerate}
\end{proposition}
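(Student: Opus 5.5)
The properties in \cref{prop:monotone-rearrangements} are classical, so I would prove them from the distribution function $\mu_f(a):=|\{t\in(0,T):f(t)>a\}|$. This function is right-continuous and nonincreasing, equals $T$ for $a<a_f$, and equals $0$ for $a\ge b_f$.

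\emph{Reduction and part (1).} First I would verify $f^\uparrow=-(-f)^\downarrow$ directly from the definitions. One uses $\{-f>-a\}=\{f<a\}$, so that $|\{f\ge a\}|\ge T-\tau$ iff $|\{-f>-a\}|\le\tau$, and then converts $\sup$ over $a$ into $-\inf$ over $-a$. This reduces every statement about $f^\uparrow$ to the corresponding one for $f^\downarrow$. Monotonicity of $f^\downarrow$ is immediate, since the admissible set of $a$ grows with $\tau$. For equimeasurability, right-continuity of $\mu_f$ gives $f^\downarrow(\tau)>a\iff\mu_f(a)>\tau$. Hence $\{f^\downarrow>a\}=[0,\mu_f(a))$ up to endpoints, which has measure $\mu_f(a)$. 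The identity \eqref{eq:rearrangement-functional-invariance} then follows: the pushforward measures of $f$ and $f^\downarrow$ agree on half-lines $(a,\infty)$, hence on all Borel sets by the $\pi$--$\lambda$ theorem. So $\int\Psi(f)=\int\Psi\,d(f_\#\mathcal L)=\int\Psi(f^\downarrow)$ for Borel $\Psi$ with $\Psi(f)\in L^1$.

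\emph{Part (2).} For the upper bound I would use the bathtub principle: $\int_0^\tau f\le\sup_{|E|=\tau}\int_E f$. By the layer-cake formula (after adding a constant to make $f\ge0$), $\int_E f=\int_0^\infty|E\cap\{f>a\}|\,da\le\int_0^\infty\min(\tau,\mu_f(a))\,da$. The right-hand side equals $\int_0^\tau f^\downarrow$ by the same formula applied to $f^\downarrow$ on $[0,\tau]$, since $\{f^\downarrow>a\}\cap[0,\tau]$ has measure $\min(\tau,\mu_f(a))$. The lower bound follows by applying the upper bound to $-f$. Equality at $\tau=T$ is \eqref{eq:rearrangement-functional-invariance} with $\Psi=\mathrm{id}$.

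\emph{Part (3)} is the only step needing a nontrivial idea, and I expect it to be the main obstacle. I would first prove the identity $\int_0^T(f^\downarrow-g^\downarrow)_+\le\int_0^T(f-g)_+$, and get the $L^1$ bound by symmetry. A clean route is the Crandall--Tartar lemma: rearrangement preserves integrals and is order preserving ($f\le g\Rightarrow f^\downarrow\le g^\downarrow$, clear from the definition). Apply it to $h=\max(f,g)$. Then $h^\downarrow\ge g^\downarrow$, so $(f^\downarrow-g^\downarrow)_+\le h^\downarrow-g^\downarrow$. Also $h^\downarrow\ge f^\downarrow$, and
\[
\int_0^T(f^\downarrow-g^\downarrow)_+\le\int_0^T(h^\downarrow-g^\downarrow)=\int_0^T(h-g)=\int_0^T(f-g)_+.
\]
Adding the same inequality with the roles of $f$ and $g$ exchanged gives the $L^1$ contraction. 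The $\uparrow$ case follows from $f^\uparrow=-(-f)^\downarrow$.

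\emph{Part (4).} Continuity of $f^\downarrow$ means it has no jumps. A jump at $\tau_0$ would force $\mu_f$ to be constant on an interval $(a_1,a_2)$ of values strictly between $\min f$ and $\max f$, i.e.\ $|\{a_1<f\le a_2\}|=0$ with both $\{f\le a_1\}$ and $\{f>a_2\}$ nonempty open-ish sets. By the intermediate value theorem, continuous $f$ attains all values in $(a_1,a_2)$, and by continuity it does so on an open set of positive measure, a contradiction. Conversely, strict decrease of $\mu_f$ is not needed: flat parts of $\mu_f$ give flat pieces, not jumps, of $f^\downarrow$. For the endpoint values, $\{f>a\}$ is open and nonempty for $a<\max f$, so $f^\downarrow(0)=\max f$; similarly $f^\downarrow(T)=\min f$. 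The $\uparrow$ statements follow by the reduction.
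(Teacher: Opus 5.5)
Your proposal is correct, and for part (4) it is essentially the paper's argument. For parts (1)--(3) your route differs: the paper gives no argument of its own and simply cites \cite[Proposition~1.1.3, Corollary~1.1.3, Propositions~1.2.2 and~1.2.1]{Kesavan}, handling the increasing cases by $f\mapsto-f$.

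You instead prove these parts from scratch. For (1) you use the characterization $f^\downarrow(\tau)>a\iff\mu_f(a)>\tau$ together with a $\pi$--$\lambda$ argument. For (2) you use the layer-cake/bathtub bound, and for (3) the Crandall--Tartar trick with $h=\max(f,g)$. All of this checks out. What your route buys is that everything is verified directly for the paper's own normalization: the infimum over $[a_f,b_f]$, the $\ge$ convention in $f^\uparrow$, and the closed interval $[0,T]$. These conventions are not literally those of the reference. The paper's route is shorter but leaves that matching to the reader.

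For (4), both you and the paper argue the same way. A jump of the rearrangement leaves a gap $(a_1,a_2)\subset(\min f,\max f)$ with $|\{a_1<f<a_2\}|=0$, which contradicts the intermediate value theorem and continuity. The paper phrases this through equimeasurability of $f^\uparrow$ and treats $\tau=0$ by right-continuity. You phrase it through flatness of $\mu_f$. Your reasoning also covers $\tau_0\in\{0,T\}$ if a ``jump'' there is read as a mismatch between the value and the one-sided limit.

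One side remark in your part (4) is backwards. Flat parts of $\mu_f$ are exactly what produce jumps of $f^\downarrow$, as your preceding sentence correctly says. Jumps of $\mu_f$, i.e.\ level sets $\{f=a\}$ of positive measure, are what produce flat pieces of $f^\downarrow$. So continuity of $\mu_f$ is what is not needed. Its strict decrease on $(\min f,\max f)$ is precisely what your contradiction argument establishes. The slip does not affect the proof.
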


\begin{proof}
Part (1) follows from \cite[Proposition~1.1.3 and Corollary~1.1.3]{Kesavan}.
Applying \cite[Proposition~1.2.2]{Kesavan} to $f$ and $-f$, with
$E=(0,\tau)$, gives (2).
Part (3) is \cite[Proposition~1.2.1]{Kesavan}, with the
increasing case obtained by replacing $(f,g)$ with $(-f,-g)$.
If $f\in C([0,T])$, then
\[
 f([0,T])=[a_f,b_f],
 \qquad |\{a<f<b\}|>0\quad(a_f<a<b<b_f).
\]
A jump at $0<\tau\le T$ would allow
$f^\uparrow(\tau-)<a<b<f^\uparrow(\tau)$, whence
\[
 0=|\{a<f^\uparrow<b\}|=|\{a<f<b\}|>0.
\]
Thus $f^\uparrow\in C([0,T])$, including at $0$ by right-continuity.
The decreasing case follows by replacing $f$ with $-f$.
\end{proof}

\section{Weighted one-dimensional formulae}\label{sec:weighted-form}

Throughout this section and \cref{sec:rearrangement}, let $\Sigma$
be an admissible $C^{1,1}$ axiconvex hypersurface with arclength
profile $\gamma=(x,z)$ and tangent angle $\theta$, as in
\cref{def:admissible-axisymmetric,sec:prelim}.
Fix $k\in\{1,\ldots,n-1\}$, set $m=n-k$.
A dot denotes differentiation with respect to the original
arclength $s$, while a prime denotes differentiation with
respect to the indicated variable.
Define
\begin{equation}\label{eq:Jj-Fj}
	\begin{aligned}
		J_j(u)&:=\int_0^u\sin^{j-1}v\dd v
		&&(1\le j\le n-1),\\
		F_j(u)&:=\sin^ju-jJ_j(u)\cos u
		&&(1\le j\le n-1),
	\end{aligned}
\end{equation}
and set $F_0\equiv1$. Then
\begin{equation}\label{eq:Fj-monotone}
	F_j(0)=0,\qquad
	F_j'(u)=jJ_j(u)\sin u\ge0
	\quad(1\le j\le n-1,\ 0\le u\le\pi).
\end{equation}

The following lemma establishes the differential
identities for the weighted change of variables, which justify the subsequent calculations in the
$\tau$-coordinate.

\begin{lemma}
\label{lem:weighted-coordinate-calculus}
Let
\[
  \tau(s)=\int_0^s x(t)^{m-1}\dd t,
  \qquad
  T:=\tau(L)=\int_0^L x(t)^{m-1}\dd t,
\]
and write $s=s(\tau)$ for the inverse of the weighted coordinate.  For
$0\le\tau\le T$, set
\[
  X(\tau)=x(s(\tau)),
  \qquad
  Y(\tau)=\frac{X(\tau)^m}{m},
  \qquad
  \vartheta(\tau)=\theta(s(\tau)).
\]
Then
$
 s(\cdot),X\in \operatorname{C}([0,T])\cap C^1((0,T)),
 $
$
 Y\in C^1([0,T])
$
and
$
 \vartheta\in \operatorname{C}([0,T]).
$
Their derivatives satisfy
\[
 s'=X^{1-m},
 \qquad
 X'=X^{1-m}\cos\vartheta,
\qquad
 Y'=\cos\vartheta.
\]
In particular, when $m=1$, one has
$T=L$, $\tau(s)=s$, $s(\tau)=\tau$, $X=Y=x$, and
$\vartheta=\theta$.
\end{lemma}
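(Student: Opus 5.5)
The plan is to treat $\tau$ as a $C^1$ change of variables on the open interval and to handle the endpoints separately by continuity. Since $x\in C^{1,1}([0,L])$ and $x>0$ on $(0,L)$, the integrand $x^{m-1}$ is continuous on $[0,L]$ and positive on $(0,L)$. For $m=1$ it is identically $1$; for $m\ge2$ it vanishes only at $s=0,L$. Hence $\tau\in C^1([0,L])$ is strictly increasing, with $\tau'(s)=x(s)^{m-1}>0$ on $(0,L)$. Therefore $\tau$ is a homeomorphism $[0,L]\to[0,T]$, and its inverse $s(\cdot)$ is continuous on $[0,T]$. By the inverse function theorem, applied on $(0,T)$ where $\tau'>0$, the inverse is $C^1$ there with
\[
s'(\tau)=\frac{1}{x(s(\tau))^{m-1}}=X(\tau)^{1-m}.
\]
Continuity of $X=x\circ s$ and $\vartheta=\theta\circ s$ on $[0,T]$ follows immediately, since $\theta\in W^{1,\infty}$ has a Lipschitz, hence continuous, representative.

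Next, on $(0,T)$ the chain rule with $x\in C^1$ gives
\[
X'=\dot x(s(\tau))\,s'(\tau)=\cos\vartheta\,X^{1-m},
\]
so $X\in C^1((0,T))$. Then $Y=X^m/m$ satisfies, on $(0,T)$,
\[
Y'=X^{m-1}X'=\cos\vartheta .
\]
The right-hand side $\cos\vartheta$ extends continuously to $[0,T]$. Since $Y$ is continuous on $[0,T]$, the mean value theorem shows that the one-sided derivatives of $Y$ at $\tau=0$ and $\tau=T$ exist and equal $\cos\vartheta(0)=1$ and $\cos\vartheta(T)=-1$. Thus $Y\in C^1([0,T])$ with $Y'=\cos\vartheta$ throughout. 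Equivalently, one can write $Y(\tau)=\int_0^\tau\cos\vartheta$ by the fundamental theorem of calculus on compact subintervals of $(0,T)$ and then let the endpoints tend to $0$ and $T$.

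The case $m=1$ is immediate: $x^{0}\equiv1$ gives $\tau(s)=s$ and $T=L$, hence $X=Y=x$ and $\vartheta=\theta$.

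The only delicate point is at the poles when $m\ge2$. There $\tau'$ vanishes, so $s(\cdot)$ and $X$ genuinely fail to be differentiable: $s'=X^{1-m}$ blows up. The $C^1$ regularity up to the boundary must therefore be obtained for $Y$ only, via the cancellation $X^{m-1}X'=\cos\vartheta$ and the limit argument above, rather than by a direct chain rule.
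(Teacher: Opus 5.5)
Your proposal is correct and follows essentially the same route as the paper: strict monotonicity of $\tau$ on $[0,L]$, the inverse function theorem on $(0,T)$, the chain-rule cancellation $Y'=X^{m-1}X'=\cos\vartheta$, and extension to the endpoints. For that last step you use the mean value theorem, while the paper uses the equivalent representation $Y(\tau)=\int_0^\tau\cos\vartheta(r)\,dr$ with a continuous integrand. The paper's proof also records $\int_0^T X^{1-m}\,d\tau=L$, obtained by integrating $s'$ and passing to the limit. This is not part of the statement, but it is used later in the reconstruction of the rearranged meridian.
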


\begin{proof}
Since $\tau\in C^1([0,L])$ and $\dot\tau=x^{m-1}>0$ on $(0,L)$,
it is strictly increasing and has a continuous inverse.  Thus $X$
and $\vartheta$ are continuous on $[0,T]$.  The inverse-function
theorem gives
\[
 s'=X^{1-m},\qquad
 X'=X^{1-m}\cos\vartheta,\qquad
 Y'=X^{m-1}X'=\cos\vartheta.
\]
Integrating the last identity and passing to the endpoints yields
\[
 Y(\tau)=\int_0^\tau\cos\vartheta(r)\dd r
 \qquad(0\le\tau\le T).
\]
The integrand is continuous, so $Y\in C^1([0,T])$.  Integrating $s'$ also gives
\[
 \int_0^T X^{1-m}\dd\tau
 =\lim_{\substack{a\rightarrow0\\ b\rightarrow T}}
   \bigl(s(b)-s(a)\bigr)=L.
 \qedhere
\]
\end{proof}

The weighted change of variables yields the following
one-dimensional representation of the total $k$-th mean curvature.

\begin{proposition}
\label{prop:weighted-form}
With the notation of \cref{lem:weighted-coordinate-calculus},
\begin{align}
  \int_\Sigma H_k\dd A
  &=
  \frac{\sphereareanm(n-k)}{n}
  \int_0^T F_k(\vartheta(\tau))\dd\tau.
  \label{eq:body-tau}
\end{align}
\end{proposition}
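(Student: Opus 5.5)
We start from \eqref{eq:H-integrand-prelim}: integrate over $\omega\in\Sn^{n-1}$ and then over $s$. With $m=n-k$ this gives
\[
\int_\Sigma H_k\dd A=\frac{\sphereareanm}{n}\int_0^L\Bigl(k\dot\theta\sin^{k-1}\theta\,x^{m}+m\sin^k\theta\,x^{m-1}\Bigr)\dd s .
\]
The goal is to recast the first term so that it also carries the weight $x^{m-1}\dd s=\dd\tau$.

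Since $J_k'=\sin^{k-1}$ and $\theta$ is Lipschitz, the chain rule gives $\frac{d}{ds}J_k(\theta)=\dot\theta\sin^{k-1}\theta$ a.e., and $J_k(\theta)$ is Lipschitz. We integrate by parts on $[0,L]$ against $x^m$, which is $C^1$ with $\frac{d}{ds}x^m=m x^{m-1}\cos\theta$. The boundary terms vanish because $x(0)=x(L)=0$ and $m\ge1$. This yields
\[
\int_0^L k\dot\theta\sin^{k-1}\theta\,x^m\dd s=-\int_0^L k\,m\,J_k(\theta)\cos\theta\,x^{m-1}\dd s .
\]
Adding the second term, the integrand becomes $m\bigl(\sin^k\theta-kJ_k(\theta)\cos\theta\bigr)x^{m-1}=m\,F_k(\theta)\,x^{m-1}$.

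Finally we change variables via $\tau(s)$ from \cref{lem:weighted-coordinate-calculus}. There $\dd\tau=x^{m-1}\dd s$, the map is strictly increasing and $C^1$, and $\vartheta(\tau)=\theta(s(\tau))$. The integrand is bounded and continuous, so
\[
\int_0^L F_k(\theta)x^{m-1}\dd s=\int_0^T F_k(\vartheta)\dd\tau .
\]
This gives \eqref{eq:body-tau} with the factor $\sphereareanm\, m/n$.

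The only delicate point is justifying the integration by parts with merely Lipschitz $\theta$. Both factors $J_k(\theta)$ and $x^m$ are absolutely continuous on $[0,L]$, so their product is too, and the product rule holds a.e. No degeneracy arises at the poles, since we integrate by parts in $s$, not in $\tau$. When $m=1$ the weight is trivial and the same computation applies verbatim.
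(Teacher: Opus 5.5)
Your proof is correct and is the paper's argument: you integrate \eqref{eq:H-integrand-prelim} over the orbits, integrate the $\dot\theta$-term by parts against $x^{m}$ using $x(0)=x(L)=0$ to get the integrand $m\,F_k(\theta)\,x^{m-1}$, and then change variables via $\dd\tau=x^{m-1}\dd s$. Your remark that the integration by parts is done in $s$, where $J_k(\theta)$ and $x^m$ are absolutely continuous, rather than in $\tau$, is what keeps the pole degeneracy out of this step.
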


\begin{proof}
Integrating \eqref{eq:H-integrand-prelim} over the spherical orbits
gives
\begin{equation}\label{eq:H-formula}
 \int_\Sigma H_k\dd A
 =
 \frac{\sphereareanm}{n}\int_0^L
 \left(
   k\dot\theta\sin^{k-1}\theta\,x^{n-k}
   +(n-k)\sin^k\theta\,x^{n-k-1}
 \right)\dd s .
\end{equation}
Since $\frac{\dd}{\dd s}(J_k\circ\theta)
=\dot\theta\sin^{k-1}\theta$ almost everywhere and
$n-k\ge1$, integration by parts and $x(0)=x(L)=0$ yield
\begin{align*}
 \int_0^L\dot\theta\sin^{k-1}\theta\,x^{n-k}\dd s
 &=
 \bigl[J_k(\theta)x^{n-k}\bigr]_0^L-(n-k)\int_0^L
 J_k(\theta)\cos\theta\,x^{n-k-1}\dd s\\
 &=
 -(n-k)\int_0^L
 J_k(\theta)\cos\theta\,x^{n-k-1}\dd s .
\end{align*}
Substitution in \eqref{eq:H-formula} gives
\begin{equation}\label{eq:body}
 \int_\Sigma H_k\dd A
 =
 \frac{\sphereareanm(n-k)}{n}
 \int_0^L F_k(\theta(s))x(s)^{n-k-1}\dd s .
\end{equation}
By \cref{lem:weighted-coordinate-calculus}, the change of variables
$\dd\tau=x(s)^{m-1}\dd s$ transforms \eqref{eq:body}
into \eqref{eq:body-tau}.
\end{proof}

Combining \eqref{eq:area-element-prelim} with \eqref{eq:body}, and
using $F_0\equiv1$, gives the unified formula
\begin{equation}\label{eq:unified-Mj-profile}
 \mathcal M_j(\Sigma)
 =\frac{\sphereareanm(n-j)}{n}
   \int_0^L F_j(\theta(s))x(s)^{n-j-1}\dd s,
 \qquad 0\le j\le n-1.
\end{equation}
For $j=0$ this is simply the area formula; for $j\ge1$ it is the
integration-by-parts identity above.
Moreover, $F_j$ is strictly positive on $(0,\pi]$ for $j\ge1$.
Admissibility forces the angle to be positive on a set of positive
measure, so $\mathcal M_j(\Sigma)>0$ for every $0\le j\le n-1$; all
normalized powers used below are therefore well defined.

\section{Rearrangement and convex comparison}
\label{sec:rearrangement}

\subsection{Construction of the convex comparison body}
\label{subsec:comparison-body-construction}

We retain the notation of \cref{sec:weighted-form}, with $k$ fixed
and $m=n-k$.  By \cref{prop:monotone-rearrangements}, the nondecreasing
rearrangement of $\vartheta$ in the weighted coordinate $\tau$ is
\begin{equation}
	\vartheta^\uparrow(\tau)
	=\sup\Bigl\{a\in[0,\pi]:
	\bigl|\{r\in(0,T):\vartheta(r)\ge a\}\bigr|
	\ge T-\tau\Bigr\},
	\qquad 0\le\tau\le T.
	\label{eq:weighted-angle-rearrangement-explicit}
\end{equation}
Since cosine is decreasing on $[0,\pi]$,
\begin{equation}
	(\cos\vartheta)^\downarrow(\tau)=\cos\vartheta^\uparrow(\tau).
	\label{eq:weighted-slope-rearrangement-explicit}
\end{equation}

We first show that the rearranged angle generates a convex comparison
body $K_k$ with $C^{1,1}$ boundary.  The following result reconstructs a meridian from the rearranged angle by restoring the arclength parametrization, while ensuring a positive interior radius and the required endpoint conditions.

\begin{proposition}
\label{prop:rearranged-meridian}
Let $\vartheta^\uparrow$ be the nondecreasing rearrangement of $\vartheta$, and define
\[
  Y^*(\tau)=\int_0^\tau\cos\vartheta^\uparrow(r)\dd r,
  \qquad
  X^*=(mY^*)^{1/m}.
\]
Introduce the new parameter by
\[
 S(\tau)=\int_0^\tau(X^*(r))^{1-m}\dd r,
 \qquad L^*=S(T),
\]
let $\lambda:[0,L^*]\to[0,T]$ be the inverse of $S$, and, for
$0\le\sigma\le L^*$, set
\[
 \widetilde x(\sigma)=X^*(\lambda(\sigma)),
 \qquad
 \widetilde z(\sigma)=\int_0^\sigma
 \sin\widetilde\vartheta(r)\dd r,
 \qquad
 \widetilde\vartheta(\sigma)=\vartheta^\uparrow(\lambda(\sigma)).
\]
Then 
$
 \widetilde x,\widetilde z\in C^1([0,L^*])
$
and
$
 \widetilde\vartheta\in C([0,L^*])
$
with $\widetilde\vartheta$ nondecreasing.
The reconstructed meridian
$\widetilde\gamma=(\widetilde x,\widetilde z)$ has unit speed,
\[
 \widetilde x'=\cos\widetilde\vartheta,
 \qquad
 \widetilde z'=\sin\widetilde\vartheta
 \quad(0\le\sigma\le L^*).
\]
Moreover,
\[
 \widetilde x(0)=\widetilde x(L^*)=0,
 \qquad
 \widetilde x>0\ \text{on }(0,L^*),
\]
\[
 \widetilde\vartheta(0)=0,
 \qquad
 \widetilde\vartheta(L^*)=\pi,
 \qquad
 \widetilde z(L^*)>0.
\]
\end{proposition}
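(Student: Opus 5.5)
The plan is to work entirely in the weighted coordinate $\tau$ and transfer information to $\sigma$ through the change of variables $S$.

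\textbf{Step 1: continuity and boundary values of the rearranged angle.} By \cref{lem:weighted-coordinate-calculus}, $\vartheta\in C([0,T])$ with $\vartheta(0)=0$ and $\vartheta(T)=\pi$. Axiconvexity gives $0\le\vartheta\le\pi$, so $\min\vartheta=0$ and $\max\vartheta=\pi$. \Cref{prop:monotone-rearrangements}(4) then shows that $\vartheta^\uparrow$ is continuous and nondecreasing, with $\vartheta^\uparrow(0)=0$ and $\vartheta^\uparrow(T)=\pi$.

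\textbf{Step 2: positivity and endpoint values of $Y^*$.} By \eqref{eq:weighted-slope-rearrangement-explicit} and \eqref{eq:HL} applied to $f=\cos\vartheta$,
\[
Y^*(\tau)=\int_0^\tau(\cos\vartheta)^\downarrow\ge\int_0^\tau\cos\vartheta=Y(\tau)>0 \qquad (0<\tau<T),
\]
with equality $Y^*(T)=Y(T)=0$ at $\tau=T$. Positivity of $Y$ comes from \eqref{eq:admissible-positive-radius} through $Y=X^m/m$. Hence $X^*>0$ on $(0,T)$ and $X^*(0)=X^*(T)=0$. Since $\cos\vartheta^\uparrow$ is continuous, $Y^*\in C^1([0,T])$, and consequently $X^*\in C^1((0,T))$ with $(X^*)'=(X^*)^{1-m}\cos\vartheta^\uparrow$.

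\textbf{Step 3: finiteness of $L^*$ (the main obstacle when $m\ge2$).} The integrand $(X^*)^{1-m}=(mY^*)^{(1-m)/m}$ is singular at both endpoints. The comparison $Y^*\ge Y$ from Step 2 gives
\[
\int_0^T (X^*)^{1-m}\,d\tau\le\int_0^T X^{1-m}\,d\tau=L,
\]
where the last equality is the identity proved at the end of \cref{lem:weighted-coordinate-calculus}. So $L^*\le L<\infty$. This monotone comparison is exactly what compensates for the degeneration of the weighted coordinate at the poles. Then $S$ is continuous and strictly increasing on $[0,T]$, and $C^1$ on $(0,T)$ with $S'=(X^*)^{1-m}>0$. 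Its inverse $\lambda$ is therefore continuous, and $C^1$ on $(0,L^*)$ with $\lambda'=(X^*)^{m-1}$.

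\textbf{Step 4: derivative formulas and $C^1$ regularity up to the endpoints.} On $(0,L^*)$ the chain rule gives
\[
\widetilde x'=(X^*)'(\lambda)\,\lambda'=\cos\vartheta^\uparrow(\lambda)=\cos\widetilde\vartheta .
\]
The function $\widetilde\vartheta=\vartheta^\uparrow\circ\lambda$ is continuous and nondecreasing as a composition of such functions. Since $\widetilde x$ is continuous on $[0,L^*]$ and its derivative $\cos\widetilde\vartheta$ extends continuously to the closed interval, the mean value theorem shows $\widetilde x\in C^1([0,L^*])$ with the formula valid at the endpoints as well. The definition of $\widetilde z$ as a primitive of a continuous function gives $\widetilde z\in C^1$ and $\widetilde z'=\sin\widetilde\vartheta$, so the meridian has unit speed.

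\textbf{Step 5: remaining boundary conditions.} These are inherited directly: $\widetilde x(0)=X^*(0)=0$, $\widetilde x(L^*)=X^*(T)=0$, $\widetilde x>0$ inside, $\widetilde\vartheta(0)=0$, and $\widetilde\vartheta(L^*)=\pi$. Finally, $\sin\widetilde\vartheta\ge0$ because $\widetilde\vartheta\in[0,\pi]$. It is not identically zero: if $\widetilde\vartheta\in\{0,\pi\}$ everywhere, continuity and monotonicity would force a jump from $0$ to $\pi$, which is impossible. Hence $\widetilde z(L^*)>0$.
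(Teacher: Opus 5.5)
Your proposal is correct and follows the paper's proof essentially step for step. In both, the comparison $Y^*\ge Y$ from \eqref{eq:HL} yields positivity of $X^*$ and $L^*\le L$, and the chain rule plus the mean value theorem at the endpoints gives $\widetilde x\in C^1([0,L^*])$ with $\widetilde x'=\cos\widetilde\vartheta$. The differences are minor: you cite \cref{prop:monotone-rearrangements}(4) explicitly for the continuity and endpoint values of $\vartheta^\uparrow$, which the paper uses without comment, and you omit the paper's extra check that $\lambda\in C^1([0,L^*])$, which the statement does not require.
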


\begin{proof}
Applying \eqref{eq:HL} to $\cos\vartheta$ gives
\[
 Y^*\ge Y\quad\text{on }[0,T],
 \qquad
Y^*(T)= Y(T)=0,
\]
and therefore
\begin{equation}\label{eq:restruct-radial-function}
	 X^*\ge X>0\quad\text{on }(0,T),
	\qquad X^*(0)=X^*(T)=0.
\end{equation}
Because $m\ge1$, the above radial comparison and
\cref{lem:weighted-coordinate-calculus} give
\[
 0<L^*=\int_0^T(X^*)^{1-m}\dd\tau
 \le\int_0^T X^{1-m}\dd\tau=L.
\]
Thus $S$ is well defined and strictly increasing, with
\[
 S'=(X^*)^{1-m}>0\quad(0<\tau<T).
\]
The inverse $\lambda=S^{-1}$ is continuous on $[0,L^*]$ and $C^1$
in the interior, where
\[
 \lambda'(\sigma)
 =\frac{1}{S'(\lambda(\sigma))}
 =\widetilde x(\sigma)^{m-1}.
\]
For $m>1$, the right-hand side extends continuously to zero at both
endpoints.  When $m=1$, one has
$S(\tau)=\tau$, $\lambda(\sigma)=\sigma$, and $L^*=T=L$.
The mean value theorem at the endpoints therefore yields
\[
 \lambda\in C^1([0,L^*]),
 \qquad
 \lambda'=\widetilde x^{m-1}
 \quad(0\le\sigma\le L^*).
\]

The angle $\widetilde\vartheta=\vartheta^\uparrow\circ\lambda$ is
continuous and nondecreasing.  Hence $\widetilde z\in C^1([0,L^*])$
and $\widetilde z'=\sin\widetilde\vartheta$.  On $(0,L^*)$, one has
\[
 \widetilde x'=(X^*)'(\lambda)\lambda'
 =\widetilde x^{1-m}\widetilde x^{m-1}\cos\widetilde\vartheta
 =\cos\widetilde\vartheta .
\]
Continuity of the right-hand side and the mean value theorem
give $\widetilde x\in C^1([0,L^*])$.  Thus
\[
 |\widetilde\gamma'|^2
 =\cos^2\widetilde\vartheta+\sin^2\widetilde\vartheta=1
 \quad\text{on }[0,L^*],
\]
so $\sigma$ is the arclength of the reconstructed meridian $\widetilde{\gamma}$.
Since $\lambda(0)=0$ and $\lambda(L^*)=T$,
\[
\widetilde x(0)=X^*(0)=0,\quad \widetilde x(L^*)=X^*(T)=0,\quad \widetilde\vartheta(0)=\vartheta^\uparrow(0)=0,\quad \widetilde\vartheta(L^*)=\vartheta^\uparrow(T)=\pi.
\]
By \eqref{eq:restruct-radial-function}, $\widetilde x>0$ on $(0,L^*)$.
Finally,
\[
 \widetilde z(L^*)
 =\int_0^{L^*}\sin\widetilde\vartheta(\sigma)\dd\sigma>0.
\]
Indeed, $\widetilde\vartheta$ takes values in $[0,\pi]$ and, by
continuity, lies in $(0,\pi)$ on a nonempty
open interval.
\end{proof}

The preceding construction yields a continuous monotone tangent angle $\widetilde\vartheta$.  The original angle $\theta$ is Lipschitz in arclength $s$, whereas $\vartheta(\tau)=\theta(s(\tau))$ may not be Lipschitz when $m=n-k>1$, because $s'$ may blow up at the two poles.  The following lemma restores Lipschitz continuity after reparametrization by the arclength of the reconstructed meridian. The argument may be viewed as a one-dimensional weighted
analogue of the P\'olya--Szeg\H{o} principle; compare
\cite[Theorem~4.2]{NobiliViolo}.

\begin{lemma}
\label{lem:weighted-Lipschitz-preservation}
With the notation of \cref{prop:rearranged-meridian}, the rearranged
angle in the reconstructed arclength satisfies the following estimate.
Write
$\Lip(\theta):=\|\dot\theta\|_{L^\infty(0,L)}$.  Then, for every
$0\le\sigma_1<\sigma_2\le L^*$,
\begin{equation}\label{eq:weighted-Lipschitz-preservation}
 0\le
 \widetilde\vartheta(\sigma_2)-\widetilde\vartheta(\sigma_1)
 \le \Lip(\theta)(\sigma_2-\sigma_1).
\end{equation}
In particular,
$\widetilde\vartheta\in W^{1,\infty}(0,L^*)$ and
$\|\widetilde\vartheta'\|_{L^\infty(0,L^*)}
 \le \Lip(\theta)$.
\end{lemma}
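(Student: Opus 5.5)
The plan is to convert \eqref{eq:weighted-Lipschitz-preservation} into a lower bound on the $\sigma$-length of the set where $\widetilde\vartheta$ lies between two angle levels. Monotonicity of $\widetilde\vartheta$ is already given by \cref{prop:rearranged-meridian}. Fix $\sigma_1<\sigma_2$ and put $a=\widetilde\vartheta(\sigma_1)$, $b=\widetilde\vartheta(\sigma_2)$; we may assume $a<b$. Since $\widetilde\vartheta$ is continuous and nondecreasing,
\[
 \sigma_2-\sigma_1\ge\bigl|\{\sigma:a<\widetilde\vartheta(\sigma)<b\}\bigr|
 =\int_{\{a<\vartheta^\uparrow<b\}}(X^*)^{1-m}\dd\tau ,
\]
using $\dd\sigma=(X^*)^{1-m}\dd\tau$. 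It therefore suffices to show that the right-hand side is at least $(b-a)/\Lip(\theta)$.

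The first step is to disintegrate this integral over angle levels. Let $\mu(c)=|\{\vartheta>c\}|=|\{\vartheta^\uparrow>c\}|$ (measures in $\tau$), which is nonincreasing. On $\{a<\vartheta^\uparrow<b\}$ the map $\tau\mapsto\vartheta^\uparrow(\tau)$ pushes $\dd\tau$ forward to $-\dd\mu$. Let $\tau^*(c)$ be a point with $\vartheta^\uparrow(\tau^*(c))=c$ and set $X^*_c:=X^*(\tau^*(c))$. Then the integral above is at least $\int_a^b (X^*_c)^{1-m}(-\dd\mu(c))$, keeping only the absolutely continuous part of $-\dd\mu$; this is harmless because everything is nonnegative. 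On the original side, $\mu(c)=\int_{\{\theta>c\}}x^{m-1}\dd s$. The coarea formula for the Lipschitz function $\theta$ gives, for a.e.\ $c$,
\[
 -\mu'(c)\ge\sum_{s\in\theta^{-1}(c),\ \dot\theta(s)\neq0}\frac{x(s)^{m-1}}{|\dot\theta(s)|}
 \ge\frac1{\Lip(\theta)}\sum_{s\in\theta^{-1}(c)}x(s)^{m-1}.
\]
The set $\{\dot\theta=0\}$ has level image of measure zero (Sard-type property of Lipschitz functions in one variable), so it does not affect the sum for a.e.\ $c$. Combining,
\[
 \sigma_2-\sigma_1\ge\frac1{\Lip(\theta)}\int_a^b\sum_{\theta(s)=c}\Bigl(\frac{x(s)}{X^*_c}\Bigr)^{m-1}\dd c .
\]
The inequality then follows once we know that for a.e.\ $c\in(a,b)$ there is some $s\in\theta^{-1}(c)$ with $x(s)\ge X^*_c$. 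When $m=1$ this is unnecessary, since the weight is trivial and the sum is at least $1$ because $\theta$ attains every value in $(0,\pi)$.

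The main obstacle is this radius comparison, $X^*_c\le\max_{\theta(s)=c}x(s)$, equivalently $Y^*(\tau^*(c))\le\max_{\vartheta(\tau)=c}Y(\tau)$. We have $Y^*(\tau^*(c))=\int_{\{\vartheta<c\}}\cos\vartheta\dd\tau$ and $\tau^*(c)=|\{\vartheta<c\}|$. I would introduce $g(\tau)=Y(\tau)-\tau\cos c$, whose derivative $\cos\vartheta-\cos c$ is positive exactly on $\{\vartheta<c\}$. Then $Y^*(\tau^*)-\tau^*\cos c=\int_0^T(g')^+\dd\tau$, and this should be compared with $\max g$. The idea is to locate an interior maximum or extremal point $\tau_0$ of a suitably chosen combination, where necessarily $\vartheta(\tau_0)=c$, and to use $Y(0)=Y(T)=0$ together with the sign of $\cos c$ to absorb the term $\tau\cos c$. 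I would treat the cases $c\le\pi/2$ and $c>\pi/2$ separately: in the second case, reverse the orientation $\tau\mapsto T-\tau$, which exchanges the roles of $\vartheta$ and $\pi-\vartheta$. If a pointwise comparison at a single level point fails, the fallback is to prove the weaker summed statement $\sum_{\theta(s)=c}(x(s)/X^*_c)^{m-1}\ge1$ directly, using that the level set meets both the ascending and the descending parts of the graph of $Y$.

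Finally, \eqref{eq:weighted-Lipschitz-preservation} shows that $\widetilde\vartheta$ is Lipschitz with constant $\Lip(\theta)$. Hence $\widetilde\vartheta\in W^{1,\infty}(0,L^*)$ with $\|\widetilde\vartheta'\|_{L^\infty}\le\Lip(\theta)$.
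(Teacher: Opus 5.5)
Your reduction matches the paper's. The bound $\sigma_2-\sigma_1\ge|\{a<\widetilde\vartheta<b\}|$, the disintegration over angle levels (the paper's $Q(t)=S(D(t))$ with $\mathrm{d}Q\ge Q'\,\mathrm{d}t$), and the coarea lower bound for the weighted distribution derivative with critical values discarded are all sound. The gap is the radius comparison you flag as the main obstacle, and your primary target for it is false: in general no level point satisfies $x(s)\ge X^*_c$. Take an axiconvex ``peanut'' with two bulges of maximal radius $1$ joined by a neck of radius $\epsilon<1$. Along the profile, $\theta$ rises from $0$ past $\pi/2$ to some $\theta_{\max}$, falls past $\pi/2$ at the neck to some $\theta_{\min}<\pi/2$, and then rises to $\pi$. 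For $c$ slightly above $\pi/2$ one has $\theta^{-1}(c)=\{s_1<s_2<s_3\}$ and $\{\theta<c\}=[0,s_1)\cup(s_2,s_3)$, hence
\[
 (X^*_c)^m=m\int_{\{\theta<c\}}x^{m-1}\cos\theta\dd s
 =x(s_1)^m-x(s_2)^m+x(s_3)^m\longrightarrow 2-\epsilon^m
 \qquad(c\downarrow\pi/2),
\]
while $\max_i x(s_i)^m\to1$. So on a whole interval of levels $X^*_c$ exceeds every level radius, and no choice of auxiliary function can rescue the pointwise claim. This also explains why your $g$-approach runs the wrong way: $\int_0^T(g')^+\dd\tau$ is the total positive variation of $g$, a sum over all its rises, so it dominates $\max g$ rather than being dominated by it.

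What the argument needs is your fallback $\sum_{\theta(s)=c}(x(s)/X^*_c)^{m-1}\ge1$, and the missing idea is the telescoping estimate that proves it. Your hint does not supply this: at a level point $Y'=\cos c$, so for $c\ne\pi/2$ all level points lie on ascending parts of $Y$ only, or on descending parts only. The paper proceeds as follows. For a.e.\ $c$ the level set $\theta^{-1}(c)=\{s_1,\dots,s_N\}$ is finite, because $\int N(t)\dd t=\int_0^L|\dot\theta|\dd s<\infty$. Hence $\{\theta<c\}$ is a finite union of relatively open intervals with endpoints $\alpha_j<\beta_j$. Then
\[
 (X^*_c)^m=\int_{\{\theta<c\}}(x^m)'\dd s=\sum_j\bigl(x(\beta_j)^m-x(\alpha_j)^m\bigr)\le\sum_{i=1}^N x(s_i)^m .
\]
The inequality holds because, after dropping the terms $-x(\alpha_j)^m$, each remaining $\beta_j$ is either $L$, where $x=0$, or a distinct level point. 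For $m>1$, monotonicity of $\ell^p$ norms then gives $(X^*_c)^{m-1}\le\bigl(\sum_i x(s_i)^m\bigr)^{(m-1)/m}\le\sum_i x(s_i)^{m-1}$. This is exactly the inequality your disintegrated bound requires, and with it the rest of your proposal goes through.
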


\begin{proof}
Set
\[
 \dd\mu=x^{m-1}\dd s,
 \qquad
 \mu(E):=\int_E x(s)^{m-1}\dd s.
\]
Since $\theta(L)-\theta(0)=\pi$, one has
$\Lip(\theta)\ge\pi/L>0$.
For $0<t<\pi$, define the right-continuous functions
\[
 D(t):=\mu(\{\theta\le t\}),
 \qquad
 Q(t):=S(D(t)).
\]
Equimeasurability and the change of variables $\sigma=S(\tau)$ then give
\begin{equation}\label{eq:arclength-angle-distribution}
 Q(t)=|\{\widetilde\vartheta\le t\}|.
\end{equation}
We shall prove the lower growth bound
\[
 Q(b)-Q(a)\ge\frac{b-a}{\Lip(\theta)}
 \qquad(0<a<b<\pi).
\]

For every \(t\in(0,\pi)\), the endpoint conditions and continuity imply \(0<D(t)<T\). Since $D$ is nondecreasing, the differentiability theorem for monotone functions gives that $D'$ exists for almost every $t\in(0,\pi)$. By \cref{prop:rearranged-meridian}, $S\in C^1((0,T))$ and $S'=(X^*)^{1-m}$. Therefore
\begin{equation*}
Q'(t)=S'(D(t))D'(t)={X^*(D(t))}^{1-m}D'(t)
\quad\text{for a.e. }t\in(0,\pi).
\end{equation*}
Define
\[
\mathcal R
:=
\{s:\theta \text{ is differentiable at }s
\text{ and }\dot\theta(s)\neq 0\},
\]
\[
\mathcal C
:=
\{s:\theta \text{ is differentiable at }s
\text{ and }\dot\theta(s)=0\}.
\]
By Rademacher's theorem, 
\[
D(t)=D_{\mathcal R}(t)+D_{\mathcal C}(t),
\]
where
\[
D_{\mathcal R}(t)
:=\mu(\mathcal R\cap\{\theta\le t\}),
\qquad
D_{\mathcal C}(t)
:=\mu(\mathcal C\cap\{\theta\le t\}).
\]
For any $a<b$,
\[
	D_{\mathcal R}(b)-D_{\mathcal R}(a)
	=
	\int_{\mathcal R\cap\{a<\theta\leq b\}}x(s)^{m-1}\,ds
	=
	\int_a^b
	\sum_{s\in\mathcal R\cap\theta^{-1}(t)}
	\frac{x(s)^{m-1}}{|\dot\theta(s)|}\,dt,
\]
where the second identity comes from the one-dimensional coarea formula. 
Consequently,
\begin{equation}\label{eq:weighted-distribution-regular-derivative}
	D_{\mathcal R}'(t)
	=
	\sum_{s\in\mathcal R\cap\theta^{-1}(t)}
	\frac{x(s)^{m-1}}{|\dot\theta(s)|}
	\qquad\text{for a.e. }t.
\end{equation}
On the other hand, since $\dot\theta=0$ on $\mathcal C$, another application of the
one-dimensional coarea formula yields
\[
\int_{\mathbb R}
\sum_{s\in\mathcal C\cap\theta^{-1}(t)}1\,dt
=
\int_{\mathcal C}|\dot\theta(s)|\,ds
=
0.
\]
It follows that $|\theta(\mathcal C)|=0$.
Therefore $D_{\mathcal C}$ is a nondecreasing function whose
Stieltjes measure is singular with respect to Lebesgue measure.
By the Lebesgue decomposition theorem for monotone functions,
\[
D_{\mathcal C}'(t)=0
\qquad\text{for a.e. }t.
\]
It follows that
\begin{equation}\label{eq:weighted-distribution-derivative}
	D'(t)
	=
	D_{\mathcal R}'(t)+D_{\mathcal C}'(t)
	=
	D_{\mathcal R}'(t)
	\qquad\text{for a.e. }t.
\end{equation}
The level set of $\theta$ is not necessarily finite for every
$t$. The claim is only that it is finite for almost every $t$.
Define the multiplicity function
\[
N(t):=
\#\bigl\{s\in[0,L]:\theta(s)=t\bigr\}
=
\#\theta^{-1}(t).
\]
Since $\theta\colon[0,L]\to\mathbb R$ is Lipschitz, the
one-dimensional area formula gives
\[
\int_{\mathbb R}N(t)\,dt
=
\int_0^L|\dot\theta(s)|\,ds
\leq
L\,\operatorname{Lip}(\theta)
<
\infty.
\]
Moreover, since
$
\theta(0)=0
$
and
$
\theta(L)=\pi,
$
the continuity of $\theta$ implies
that
\[
1\leq N(t)<\infty
\qquad\text{for a.e. }t\in(0,\pi).
\]
Let $\mathcal N_\theta$ be the set where $\theta$ is not
differentiable.  Since $\theta$ is Lipschitz, it maps Lebesgue-null
sets to Lebesgue-null sets, so $|\theta(\mathcal N_\theta)|=0$.
Together with $|\theta(\mathcal C)|=0$ and the almost-everywhere
finiteness of the multiplicity, this allows us to choose, after also excluding the null sets where \eqref{eq:weighted-distribution-regular-derivative} and \eqref{eq:weighted-distribution-derivative} fail, a full-measure set
$\mathcal G\subset(0,\pi)$ such that, for every $t\in\mathcal G$, the
level set is finite and regular,
\[
 \theta^{-1}(t)=\{s_1,\ldots,s_{N(t)}\},
 \qquad
 \dot\theta(s_i)\ne0\quad(1\le i\le N(t)),
\]
and $D$ is differentiable at $t$. By \eqref{eq:weighted-distribution-regular-derivative} and \eqref{eq:weighted-distribution-derivative},
\begin{equation}\label{eq:weighted-distribution-derivative-g}
 D'(t)
 =\sum_{i=1}^{N(t)}\frac{x(s_i)^{m-1}}{|\dot\theta(s_i)|}
 \ge\frac1{\Lip(\theta)}\sum_{i=1}^{N(t)} x(s_i)^{m-1}
 \qquad\text{for }t\in\mathcal G.
\end{equation}
Set $R(t):=X^*(D(t))$, the radius at the weighted coordinate $D(t)$.
Since $D(t)\in(0,T)$, one has $R(t)>0$. For every $t\in\mathcal G$, the level set $\theta^{-1}(t)$ is finite and hence $\mu(\{\theta=t\})=0$. In particular, $D(t)=\mu(\{\theta<t\})$. Write the finitely many relative
components of $\{\theta<t\}$ in $[0,L]$ as $I_j$, and denote their
endpoints by $\alpha_j<\beta_j$, allowing $\alpha_j=0$ or
$\beta_j=L$.  Applying 
\eqref{eq:rearrangement-functional-invariance} with 
$\Psi_t(u)=\cos u\,\mathbf 1_{\{u<t\}}$ gives
\[
 \int_0^{D(t)}\cos\vartheta^\uparrow(\tau)\dd\tau
 =\int_{\{\theta<t\}}\cos\theta\dd\mu.
\]
Combining this identity with $(x^m)'=m x^{m-1}\cos\theta$, we obtain
\begin{align}
 R(t)^m
 &=m\int_0^{D(t)}\cos\vartheta^\uparrow(\tau)\dd\tau
 \notag\\
 &=m\int_{\{\theta<t\}}x^{m-1}\cos\theta\dd s
 =\int_{\{\theta<t\}}(x^m)'\dd s
 \notag\\
 &=\sum_j\bigl(x(\beta_j)^m-x(\alpha_j)^m\bigr)
 \le\sum_{i=1}^{N(t)} x(s_i)^m .
 \label{eq:weighted-level-primitive}
\end{align}
Indeed, after discarding the nonpositive terms
$-x(\alpha_j)^m$, every remaining positive right-endpoint term comes
from a distinct $s_i$; the possible endpoint contributions at $0$ and
$L$ vanish because $x(0)=x(L)=0$.  If $m>1$, monotonicity of
finite-dimensional $\ell^p$ norms gives
\[
 R(t)^{m-1}
 \le\left(\sum_{i=1}^{N(t)} x(s_i)^m\right)^{(m-1)/m}
 \le\sum_{i=1}^{N(t)} x(s_i)^{m-1}.
\]
For $m=1$, the desired inequality reads $1\le N(t)$.
Thus, in every case,
\begin{equation}\label{eq:weighted-level-radius-bound}
 R(t)^{m-1}\le\sum_{i=1}^{N(t)} x(s_i)^{m-1}.
\end{equation}
Combining
\eqref{eq:weighted-distribution-derivative-g} and
\eqref{eq:weighted-level-radius-bound} gives
\[
 Q'(t)=R(t)^{1-m}D'(t)\ge\frac1{\Lip(\theta)}
 \quad\text{for a.e. }t\in(0,\pi).
\]

Since $Q$ is nondecreasing and right-continuous, its Lebesgue--Stieltjes
measure satisfies
\[
 \mathrm{d}Q\ge Q'(t)\,\mathrm{d}t
 \ge\frac1{\Lip(\theta)}\,\mathrm{d}t.
\]
Consequently,
\begin{equation}\label{eq:inverse-angle-global-bound}
 Q(b)-Q(a)=\mathrm{d}Q((a,b])
 \ge\frac{b-a}{\Lip(\theta)}
 \qquad(0<a<b<\pi).
\end{equation}
Now let $0\le\sigma_1<\sigma_2\le L^*$ and put
$a=\widetilde\vartheta(\sigma_1)$ and
$b=\widetilde\vartheta(\sigma_2)$.  If $a<b$, choose
$a<a'<b'<b$.  By \eqref{eq:arclength-angle-distribution} and
monotonicity,
\[
 [0,\sigma_1]\subset\{\widetilde\vartheta\le a'\},
 \qquad
 \{\widetilde\vartheta\le b'\}\subset[0,\sigma_2),
\]
and hence
\[
 \sigma_1\le Q(a'),
 \qquad
 Q(b')\le\sigma_2.
\]
Thus \eqref{eq:inverse-angle-global-bound} gives
\[
 \frac{b'-a'}{\Lip(\theta)}
 \le Q(b')-Q(a')
 \le\sigma_2-\sigma_1.
\]
Letting
$a'\downarrow a$ and $b'\uparrow b$ proves
\[
 \widetilde\vartheta(\sigma_2)
 -\widetilde\vartheta(\sigma_1)
 \le \Lip(\theta)(\sigma_2-\sigma_1);
\]
the same argument covers $a=0$ or $b=\pi$, and the case $a=b$ is
immediate.  Together with the monotonicity of
$\widetilde\vartheta$, this proves
\eqref{eq:weighted-Lipschitz-preservation}.
\end{proof}

Combining the reconstruction with the preceding Lipschitz estimate now
completes the construction of the comparison body.

\begin{corollary}
\label{cor:rearranged-C11-closure}
With the notation of \cref{prop:rearranged-meridian}, the
arclength-reconstructed meridian $\widetilde\gamma$ belongs to
$C^{1,1}([0,L^*];\R^2)$ and generates the boundary
$\partial K_k$ of a convex
axisymmetric body with $C^{1,1}$ boundary.
\end{corollary}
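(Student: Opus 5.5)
The plan is to assemble the corollary from \cref{prop:rearranged-meridian} and \cref{lem:weighted-Lipschitz-preservation}, and then to verify convexity and $C^{1,1}$ regularity of the surface of revolution.

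First, by \cref{prop:rearranged-meridian}, $\widetilde\gamma\in C^1([0,L^*])$ with $\widetilde\gamma'=(\cos\widetilde\vartheta,\sin\widetilde\vartheta)$. By \cref{lem:weighted-Lipschitz-preservation}, $\widetilde\vartheta$ is Lipschitz with constant $\Lip(\theta)$. Since $u\mapsto(\cos u,\sin u)$ is $1$-Lipschitz, $\widetilde\gamma'$ is Lipschitz, so $\widetilde\gamma\in C^{1,1}$. Moreover:
\begin{itemize}
\item $\widetilde\vartheta$ is nondecreasing from $0$ to $\pi$;
\item $\widetilde x>0$ on $(0,L^*)$ and $\widetilde z(L^*)>0$.
\end{itemize}
Thus $\widetilde\vartheta\in W^{1,\infty}$ satisfies \eqref{eq:admissible-endpoints}--\eqref{eq:admissible-positive-radius}. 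By the equivalence recorded after \cref{ex:axiconvex-outside-gamma} (via \cite[Proposition~3.1]{DHMT}), $\widetilde\gamma$ is an injective admissible meridian. Its surface of revolution $\partial K_k$ is therefore a closed embedded hypersurface bounding a domain $K_k$.

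Second, convexity. Since $\widetilde\vartheta\in[0,\pi]$ is monotone, $\widetilde z$ is nondecreasing, and the curve is a graph over $x$ on the pieces where $\widetilde\vartheta<\pi/2$ and where $\widetilde\vartheta>\pi/2$. I would argue directly that the planar region bounded by $\widetilde\gamma$ and its reflection $(-\widetilde x,\widetilde z)$ is convex. The doubled closed curve has tangent angle running monotonically through a total turning of $2\pi$: $\widetilde\vartheta$ on the right half, and the reflected angle $\pi-\widetilde\vartheta$ traversed in reverse on the left. At the poles the tangents are horizontal, so the angle is continuous there. A simple closed $C^1$ curve with monotone tangent angle of total variation $2\pi$ bounds a convex region.

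It then remains to pass from the planar section to the body. $K_k$ is the set of points $(y,z)$ with $(|y|,z)$ in this planar region. Each horizontal slice of $K_k$ is a centered ball whose radius $r(z)$ is concave in $z$, as follows from convexity of the planar region. So $K_k=\{|y|\le r(z)\}$ with $r$ concave and nonnegative. Since $(y,z)\mapsto|y|-r(z)$ is convex, $K_k$ is convex.

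Finally, $C^{1,1}$ regularity of $\partial K_k$ as a hypersurface. Away from the poles this follows from the parametrization $(\widetilde x\omega,\widetilde z)$ with $\widetilde x>0$. Near the poles I would use $\widetilde\vartheta(0)=0$ and the Lipschitz bound $\widetilde\vartheta(\sigma)\le\Lip(\theta)\sigma$. Then $\widetilde x'=\cos\widetilde\vartheta\ge 1/2$ near $\sigma=0$, so $\widetilde z$ is locally a function $g$ of $\widetilde x$ with
\[
g'(x)=\tan\widetilde\vartheta,\qquad g'(0)=0,
\]
and $g'$ Lipschitz in $x$. The pole is then locally the graph of $y\mapsto g(|y|)$. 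Its gradient $g'(|y|)y/|y|$ is Lipschitz, since $g'(0)=0$ and $|g'(r)|\le Cr$; combining the radial and angular contributions gives a Lipschitz bound. The same argument applies at the upper pole. This pole regularity is the step I expect to need the most care, while the convexity argument is standard.
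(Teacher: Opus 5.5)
Your proof is correct. The regularity part coincides with the paper's proof: the Lipschitz angle comes from \cref{lem:weighted-Lipschitz-preservation}, injectivity from \cite[Proposition~3.1]{DHMT}, and pole regularity from the radial graph $y\mapsto g(|y|)$ with $g'(0)=0$. The paper states that pole step in one sentence, and your estimate $|g'(r)|\le Cr$ fills it in.

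The convexity step is where you take a genuinely different route. The paper argues through curvature. It records $\widetilde\kappa_1=\widetilde\vartheta'\ge0$ and $\widetilde\kappa_2=\dots=\widetilde\kappa_n=\sin\widetilde\vartheta/\widetilde x\ge0$ almost everywhere, and writes $\partial K_k$ locally as a $C^{1,1}$ graph $u$ with $D^2u\ge0$ a.e. Because a Lipschitz gradient carries no singular second-derivative part, this gives local convexity. The Tietze--Nakajima theorem then passes from local to global convexity.

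You instead stay in the meridian plane. The doubled curve is simple, since $\widetilde\gamma$ is injective and $\widetilde x>0$ in the interior, and it is $C^1$ at the poles. Its continuous tangent angle is nondecreasing with total turning $2\pi$, so every tangent line supports it and it bounds a convex region. One cosmetic correction: on the reversed left half the continuous lift is $2\pi-\widetilde\vartheta(L^*-t)$, not $\pi-\widetilde\vartheta$, because reversing orientation adds $\pi$. Concavity of the slice radius $r(z)$ then gives convexity of $K_k=\{|y|\le r(z)\}$ directly.

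Your route uses the monotonicity of $\widetilde\vartheta$ itself rather than its a.e.\ differential consequence. It therefore avoids both the measure-theoretic passage from a.e.\ curvature signs to convexity and the local-to-global theorem. The paper's route is intrinsic and does not depend on reducing to a planar profile. Within this axisymmetric setting, your planar argument would serve equally well, including where the paper reuses the curvature criterion for $\Sigma$ in \cref{prop:convex-comparison}.
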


\begin{proof}
	By \cref{prop:rearranged-meridian}, \cref{lem:weighted-Lipschitz-preservation} and \cite[Proposition~3.1]{DHMT}, $\widetilde\gamma$ is an injective
	unit-speed $C^{1,1}$ meridian. Near each pole, $\widetilde{\vartheta}(0)=0$ and $\widetilde{\vartheta}(L^*)=\pi$ give $\widetilde z=\phi(\widetilde x)$ with
	$\phi\in C^{1,1}$ and $\phi'(0)=0$, so the radial extension
	$y\mapsto\phi(|y|)$ is $C^{1,1}$.
	Thus its revolution is a closed embedded $C^{1,1}$ hypersurface
	enclosing a compact connected region $K_k$ with nonempty interior.
			
			Since $\widetilde\vartheta$ is nondecreasing and takes values
			in $[0,\pi]$,
			\[
			\widetilde\kappa_1=\widetilde\vartheta'\ge0,\qquad
			\widetilde\kappa_2=\cdots=\widetilde\kappa_n
			=\frac{\sin\widetilde\vartheta}{\widetilde x}\ge0
			\quad\text{a.e.}
			\]
			These curvature bounds imply that $K_k$ is locally convex.
			Indeed, near each boundary point, write $\partial K_k$ as the
			graph of a $C^{1,1}$ function $u$, with $K_k$ lying above the graph. With
			the outward normal convention, the nonnegativity of the second fundamental form gives $D^2u\ge0$ almost everywhere.  Since $Du$ is Lipschitz, there is no
			additional singular second-derivative measure, so the graph is
			locally convex.  This gives local convexity of the
			enclosed set $K_k$.
			Since $K_k$ is compact and connected, the Tietze--Nakajima
			theorem \cite[Theorem~1.3]{ManiLevitska1993} yields its convexity.
\end{proof}

\subsection{Primitive rearrangement and curvature comparison}
\label{subsec:primitive-curvature-comparison}

The geometric construction is now complete: the rearranged angle
generates a convex body with $C^{1,1}$
boundary.  To compare its curvature functionals with those of the
original hypersurface, we first prove the following 
primitive rearrangement inequality.

For $0\le j\le n-1$ define
\begin{equation}\label{eq:Aj-definition}
 A_j(c):=F_j(\arccos c),\qquad -1\le c\le1,
\end{equation}
where $F_0\equiv1$.

\begin{proposition}
\label{prop:primitive-rearrangement}
Let $T>0$, and for $f\in L^1(0,T)$ write
\[
 (\mathcal P f)(t):=\int_0^t f(r)\dd r.
\]
Let $c\in L^\infty(0,T)$ take values in $[-1,1]$,
and assume that $\mathcal P c\ge0$ on $[0,T]$.  Let $c^\downarrow$ be
the decreasing rearrangement from
\cref{prop:monotone-rearrangements}.  By
\eqref{eq:HL},
\[
 \mathcal P(c^\downarrow)\ge \mathcal P c\ge0
 \qquad\text{on }[0,T].
\]
Then, for $0\le j\le n-1$ and $a>0$,
\begin{equation}\label{eq:primitive-rearrangement}
 \int_0^T A_j(c(t))[(\mathcal P c)(t)]^a\dd t
 \le
 \int_0^T A_j(c^\downarrow(t))
       [(\mathcal P(c^\downarrow))(t)]^a\dd t.
\end{equation}
If, in addition, $c$ is continuous on $[0,T]$ and
$\mathcal P c>0$ on $(0,T)$, equality in
\eqref{eq:primitive-rearrangement} holds if and only if $c$ is nonincreasing on $[0,T]$.
\end{proposition}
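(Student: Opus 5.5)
We will prove \eqref{eq:primitive-rearrangement} by \emph{adjacent exchanges}. First we reduce to step functions, and then we sort a step function into $c^\downarrow$ by finitely many swaps, each of which does not decrease the functional
\[
\Phi(c):=\int_0^T A_j(c(t))\,[(\mathcal P c)(t)]^a\dd t .
\]

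\emph{Reduction.} Approximate $c$ in $L^1$ by step functions $c_N$ that are constant on the $N$ intervals of a uniform grid of mesh $h=T/N$. We take $c_N$ to be the grid averages of $c$. Then $\mathcal P c_N=\mathcal P c$ at the grid points, and $\mathcal P c_N$ is linear between them, so $\mathcal P c_N\ge0$. By \eqref{eq:rearrangement-L1-contraction} we have $c_N^\downarrow\to c^\downarrow$ in $L^1$, and primitives converge uniformly. The function $A_j$ is continuous and bounded on $[-1,1]$, so dominated convergence gives $\Phi(c_N)\to\Phi(c)$ and $\Phi(c_N^\downarrow)\to\Phi(c^\downarrow)$. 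For a step function, $c_N^\downarrow$ is the same list of values sorted decreasingly, and bubble sort reaches it by adjacent transpositions. Each transposition replaces an increasing adjacent pair $(c_1,c_2)$, $c_1<c_2$, by $(c_2,c_1)$. This leaves the primitive unchanged outside the two cells and raises it pointwise inside them, so the hypothesis $\mathcal P\ge0$ persists.

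\emph{Properties of $A_j$.} Put $u=\arccos c$. By \eqref{eq:Fj-monotone},
\[
A_j'(c)=-jJ_j(u)\le0,\qquad A_j''(c)=j\sin^{j-2}u\ge0 .
\]
Hence $A_j\ge0$ is nonincreasing and convex. Moreover the tangent intercept $A_j(c)-cA_j'(c)$ is nonincreasing in $c$ and vanishes at $c=-1$, since $A_j(-1)=jJ_j(\pi)=-A_j'(-1)$. This gives $A_j(c)-cA_j'(c)\le0$ on $[-1,0]$, that is, $c\mapsto A_j(c)/|c|$ is nonincreasing in $|c|$ for $c<0$. For $j=0$ everything is trivial.

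\emph{One swap.} Let $P_0\ge0$ be the primitive at the left end of the pair. Set
\[
I_1=\int_0^h\bigl[(P_0+c_2h+c_1r)^a-(P_0+c_1r)^a\bigr]\dd r,\qquad
I_2=\int_0^h\bigl[(P_0+c_1h+c_2r)^a-(P_0+c_2r)^a\bigr]\dd r .
\]
The gain of the swap is $A_j(c_1)I_1-A_j(c_2)I_2$. The difference $I_1-I_2$ equals the change of $\int P^a$ under the swap, which is $\ge0$ because the swapped primitive dominates and $y\mapsto y^a$ is increasing. The sign of $I_1$ is that of $c_2$, and the sign of $I_2$ is that of $c_1$. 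There are three cases.
\begin{enumerate}
\item If $I_2\ge0$, then $A_j(c_1)I_1\ge A_j(c_1)I_2\ge A_j(c_2)I_2$.
\item If $c_1<0\le c_2$, then the gain is $\ge0$ termwise.
\item If $c_1<c_2<0$, we must show $A_j(c_1)|I_1|\le A_j(c_2)|I_2|$. Here $|I_1|\approx aP^{a-1}|c_2|h^2$ and $|I_2|\approx aP^{a-1}|c_1|h^2$, so the claim reduces to $A_j(c_1)/|c_1|\le A_j(c_2)/|c_2|$, which is the tangent-intercept property above.
\end{enumerate}

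\emph{Main obstacle.} The hard step is making case (3) exact rather than infinitesimal. For arbitrary $a>0$ the map $y\mapsto y^a$ may be convex or concave, and $P$ may be near $0$. We would first try to rewrite $I_1,I_2$ as integrals of $a y^{a-1}$ over the two parallelogram regions swept by the primitives, and compare them after an affine change of variables that scales one region to the other by the ratio $c_2/c_1$. If that fails, we would avoid finite swaps altogether. Instead we would refine the grid and argue via a continuous interpolation $c_\varepsilon$ between $c_N$ and its partial sort, differentiating $\Phi$ along the flow so that only the linearized comparison is needed. Errors of order $o(h^2)$ per swap, times $O(N^2)$ swaps, still vanish only if they are $o(h^2)$ uniformly, so the order of the error must be checked carefully.

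\emph{Equality.} Assume $c$ is continuous and $\mathcal P c>0$ on $(0,T)$. If $c$ is not nonincreasing, there exist $t_1<t_2$ with $c(t_1)<c(t_2)$, and by continuity there are neighbourhoods on which $c$ is strictly smaller near $t_1$ than near $t_2$. We then exhibit a swap of two small intervals there with strictly positive gain, using $P>0$ and the strict inequality $I_1>I_2$. Since the remaining sorting does not decrease $\Phi$, the inequality in \eqref{eq:primitive-rearrangement} is strict. Conversely, if $c$ is nonincreasing, then $c=c^\downarrow$ almost everywhere and equality holds.
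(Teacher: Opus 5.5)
Your architecture (grid averages, bubble sort by adjacent transpositions, passage to the limit through the $L^1$ contraction) is the paper's, and your cases (1) and (2) are correct. But case (3), the only case where the sign of the swap gain is really in doubt, is left as a first-order heuristic. So \eqref{eq:primitive-rearrangement} is not actually proved. The missing observation is that no approximation is needed. With $\psi(r):=r^{a+1}/(a+1)$, explicit integration gives the exact identity
\[
c_1I_1=c_2I_2=D:=\psi\bigl(P_0+(c_1+c_2)h\bigr)-\psi(P_0+c_1h)-\psi(P_0+c_2h)+\psi(P_0),
\]
so for $c_1c_2\neq0$ the gain is exactly $D\bigl(A_j(c_1)/c_1-A_j(c_2)/c_2\bigr)$. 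The four arguments of $\psi$ are values of the original or swapped primitive, hence nonnegative. $D$ is a mixed second difference of the strictly convex $\psi$, so it has the sign of $c_1c_2$ for every $a>0$ and every $P_0\ge0$. In case (3) the gain is therefore nonnegative, because $A_j(c)/c$ decreases on $[-1,0)$. This is what your proposed rescaling by $c_2/c_1$ produces: $I_1/c_2=I_2/c_1=D/(c_1c_2)\ge0$, and the gain is this number times $c_2A_j(c_1)-c_1A_j(c_2)$, which is the paper's $\Delta_j$. Since the identity is exact, there are no per-swap errors to accumulate and the $o(h^2)$ worry disappears. Your derivation of the monotonicity also has a sign slip. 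The tangent intercept is $A_j(c)-cA_j'(c)=(1-c^2)^{j/2}\ge0$, which is nondecreasing on $[-1,0]$. It is this nonnegativity that gives $\frac{d}{dc}\bigl(A_j(c)/c\bigr)\le0$, hence your (correct) conclusion.

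The equality argument has a second gap. A continuous $c$ that is not nonincreasing need not increase on any interval, and strict inequalities for step approximants are lost in the limit. So you must exchange two short fragments $(t_1,t_1+h)$ and $(t_2,t_2+h)$ a fixed distance apart, and neither your constant-block computation nor ``$I_1>I_2$'' applies to them. Write $p=c(t_1)<q=c(t_2)$ and $P=\mathcal Pc$. Such an exchange raises the primitive by $\delta_h=(q-p)h+o(h)$ on the whole gap, and its gain is
\[
h\Bigl[\bigl(A_j(p)-A_j(q)\bigr)\bigl(P(t_2)^a-P(t_1)^a\bigr)+a(q-p)\int_{t_1}^{t_2}A_j(c)P^{a-1}\dd t\Bigr]+o(h).
\]
The first term is negative whenever $P(t_2)<P(t_1)$, and this cannot be avoided in general. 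For example, if $c$ is nonincreasing while positive and negative afterwards, every inversion $t_1<t_2$ has $[t_1,t_2]\subset\{c<0\}$. The paper handles this by writing $P(t_2)^a-P(t_1)^a=a\int_{t_1}^{t_2}cP^{a-1}\dd t$. The bracket then becomes $a(q-p)\int_{t_1}^{t_2}\bigl(A_j(c)-\lambda_{p,q}c\bigr)P^{a-1}\dd t$, where $\lambda_{p,q}$ is the chord slope of $A_j$ on $[p,q]$. The paper then proves the additional pointwise inequality $A_j(\zeta)>\lambda_{p,q}\zeta$ on $[-1,1]$. For $j\ge1$ this follows from $-jJ_j(\pi)<\lambda_{p,q}<0$ together with $A_j(\zeta)/\zeta\le A_j(-1)/(-1)=-jJ_j(\pi)$ for $\zeta<0$. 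The expansion itself relies on $\min_{[t_1,t_2+h_0]}P>0$. Your final step is correct once this is in place: apply the inequality to the equimeasurable $c_h$, which satisfies $\mathcal Pc_h\ge\mathcal Pc\ge0$.
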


\begin{proof}
Fix $j$ and $a$, and write
\[
 \mathscr E(c):=\int_0^T A_j(c(t))[(\mathcal P c)(t)]^a\dd t.
\]
We prove the
inequality \eqref{eq:primitive-rearrangement} by approximating $c$ with step functions.  For
$\nu\in\mathbb N$, put
\[
 h_\nu=\frac{T}{2^\nu},
 \qquad
 I_{\nu,i}=[ih_\nu,(i+1)h_\nu),
 \qquad 0\le i\le2^\nu-1,
\]
with the last interval understood to contain $T$, and define
\[
 c_\nu(t)=\frac1{h_\nu}\int_{I_{\nu,i}}c(r)\dd r
 \quad\text{for }t\in I_{\nu,i}.
\]
Then $-1\le c_\nu\le1$, and 
$
 (\mathcal P c_\nu)(ih_\nu)=(\mathcal P c)(ih_\nu)\ge0
$
for
$
0\le i\le2^\nu.
$
Since $\mathcal P c_\nu$ is affine on each block, it is nonnegative on
$[0,T]$.

Suppose that two adjacent blocks of the step function
have values $p<q$.  Use a local coordinate with origin at the left
endpoint of their union, and let $\rho\ge0$ be the value of the primitive there.  Define on $(0,2h_\nu)$
\[
 \begin{aligned}
 c_{pq}&=p\mathbf 1_{(0,h_\nu)}
          +q\mathbf 1_{(h_\nu,2h_\nu)},\\
 c_{qp}&=q\mathbf 1_{(0,h_\nu)}
          +p\mathbf 1_{(h_\nu,2h_\nu)},\\
 u_{pq}(t)&=\rho+\int_0^t c_{pq}(r)\dd r,\\
 u_{qp}(t)&=\rho+\int_0^t c_{qp}(r)\dd r.
 \end{aligned}
\]
Since $u_{pq}$ is the primitive on this pair, $u_{pq}\ge0$.
Direct integration gives
\[
 u_{qp}(t)-u_{pq}(t)
 =(q-p)\min\{t,h_\nu,2h_\nu-t\}\ge0
 \qquad(0\le t\le2h_\nu).
\]
In particular, $u_{qp}\ge u_{pq}\ge0$ and $u_{qp}(2h_\nu)=u_{pq}(2h_\nu)$.
It follows from a direct calculation that
\begin{equation}\label{eq:dyadic-exchange-gain}
	\begin{aligned}
		&\int_0^{2h_\nu}A_j(c_{qp})u_{qp}^a\dd t
		-\int_0^{2h_\nu}A_j(c_{pq})u_{pq}^a\dd t\\
		=&
		A_j(q)\int_0^{h_\nu}\Bigl[(\rho+qt)^a-(\rho+ph_\nu+qt)^a\Bigr]\dd t\\
		&-A_j(p)\int_0^{h_\nu}\Bigl[(\rho+pt)^a-(\rho+qh_\nu+pt)^a\Bigr]\dd t.
	\end{aligned}
\end{equation}
Set
\[
 \Phi_a(r)=\frac{r^{a+1}}{a+1},\qquad r\ge0.
\]
For $pq\ne0$, direct integration gives
\begin{equation*}
	\begin{aligned}
		&\int_0^{2h_\nu}A_j(c_{qp})u_{qp}^a\dd t
		-\int_0^{2h_\nu}A_j(c_{pq})u_{pq}^a\dd t\\
		=&
		\left(\frac{A_j(q)}q-\frac{A_j(p)}p\right)
		\Bigl[
		\Phi_a(\rho+ph_\nu)+\Phi_a(\rho+qh_\nu)-\Phi_a(\rho)
		-\Phi_a(\rho+(p+q)h_\nu)
		\Bigr]\\
		=&
		\left(\frac{A_j(q)}q-\frac{A_j(p)}p\right)
		ph_\nu\int_0^1
		\Bigl[\Phi_a'(\rho+ph_\nu t)
		-\Phi_a'(\rho+qh_\nu+ph_\nu t)\Bigr]\dd t.
	\end{aligned}
\end{equation*}
The four arguments of $\Phi_a$ are nonnegative, being $\rho$,
$u_{pq}(h_\nu)$, $u_{qp}(h_\nu)$, and $u_{pq}(2h_\nu)$.
The derivative $\Phi_a'(r)=r^a$ is continuous and strictly increasing on
$[0,\infty)$.  Therefore,
\[
 ph_\nu\int_0^1
 \Bigl[\Phi_a'(\rho+ph_\nu t)
       -\Phi_a'(\rho+qh_\nu+ph_\nu t)\Bigr]\dd t
\]
has sign $-\operatorname{sgn}(pq)$.  Moreover,
\[
 \frac{A_j(q)}q-\frac{A_j(p)}p
 =-\frac{\Delta_j(p,q)}{pq},
 \qquad
 \Delta_j(p,q):=qA_j(p)-pA_j(q).
\]
For $j\ge1$, differentiation of
$A_j(c)=F_j(\arccos c)$ and \eqref{eq:Fj-monotone} gives
\[
 A_j'(c)=-jJ_j(\arccos c),\qquad -1<c<1.
\]
Consequently, for $c\in(-1,0)\cup(0,1)$,
\begin{equation}\label{eq:Aj-over-c-derivative}
 \frac{\dd}{\dd c}\left(\frac{A_j(c)}{c}\right)
 =\frac{cA_j'(c)-A_j(c)}{c^2}
 =-\frac{(1-c^2)^{j/2}}{c^2}<0.
\end{equation}
Hence $A_j(c)/c$ is strictly decreasing separately on
 $(-1,0)$ and $(0,1)$. This proves that $\Delta_j(p,q)\ge0$ when $pq>0$.  When $p<0<q$, it follows directly from
$A_j\ge0$.  For $j=0$, it is simply
$\Delta_0(p,q)=q-p>0$.
Thus, for $pq\ne0$, 
\begin{equation*}
	\int_0^{2h_\nu}A_j(c_{pq})u_{pq}^a\dd t\le\int_0^{2h_\nu}A_j(c_{qp})u_{qp}^a\dd t.
\end{equation*}
If $p=0$ or $q=0$, the same inequality follows directly from \eqref{eq:dyadic-exchange-gain}.

Repeating these exchanges places the
finitely many values of $c_\nu$ in nonincreasing order.  Since the
blocks have equal length, the resulting function agrees almost
everywhere with $c_\nu^\downarrow$.  Therefore
\begin{equation}\label{eq:comparison-step-function}
	\mathscr E(c_\nu)\le \mathscr E(c_\nu^\downarrow).
\end{equation}
By the Lebesgue differentiation theorem,
\[
	 c_\nu(t)\longrightarrow c(t)
	 \qquad\text{for a.e. }t\in(0,T).
\]
Since $|c_\nu-c|\le2$, dominated convergence gives
\[
	\|c_\nu-c\|_{L^1(0,T)}\longrightarrow0.
\]
Moreover,
\[
	\|\mathcal P c_\nu-\mathcal P c\|_{L^\infty(0,T)}
	\le \|c_\nu-c\|_{L^1(0,T)}
	\longrightarrow0.
\]
Thus
\[
\begin{aligned}
	|\mathscr E(c_\nu)-\mathscr E(c)|
	={}&\left|\int_0^T
	\bigl[A_j(c_\nu)-A_j(c)\bigr](\mathcal Pc_\nu)^a\,\dd t
	+\int_0^T
	A_j(c)\bigl((\mathcal Pc_\nu)^a-(\mathcal Pc)^a\bigr)\,\dd t\right|\\
	\le{}&
	T^a\|A_j(c_\nu)-A_j(c)\|_{L^1(0,T)}\\
	&+T\|A_j\|_{L^\infty([-1,1])}
	\|(\mathcal Pc_\nu)^a-(\mathcal Pc)^a\|_{L^\infty(0,T)}.
\end{aligned}
\]
The first term tends to $0$ by dominated convergence, since $A_j$ is
continuous on $[-1,1]$ and $c_\nu\to c$ almost everywhere.
The second tends to $0$ because $\mathcal P c_\nu\to\mathcal P c$
uniformly and $r\mapsto r^a$ is uniformly continuous on $[0,T]$.
Therefore,
\[
\mathscr E(c_\nu)\longrightarrow\mathscr E(c).
\]
For the rearranged functions, the contraction estimate
\eqref{eq:rearrangement-L1-contraction} gives
\[
	\|c_\nu^\downarrow-c^\downarrow\|_{L^1(0,T)}
	\le \|c_\nu-c\|_{L^1(0,T)}\longrightarrow0,
\]
and hence
\[
	\|\mathcal P(c_\nu^\downarrow)-\mathcal P(c^\downarrow)\|_{L^\infty(0,T)}
	\le \|c_\nu^\downarrow-c^\downarrow\|_{L^1(0,T)}
	\longrightarrow0.
\]
The uniform continuity of $A_j$ on $[-1,1]$ also gives
\[
 \|A_j(c_\nu^\downarrow)-A_j(c^\downarrow)\|_{L^1(0,T)}
 \longrightarrow0.
\]
The preceding energy estimate therefore yields
\[
\mathscr E(c_\nu^\downarrow)\longrightarrow
\mathscr E(c^\downarrow).
\]
Passing to the limit in \eqref{eq:comparison-step-function} proves
\eqref{eq:primitive-rearrangement}.

Suppose that equality in \eqref{eq:primitive-rearrangement} holds.
If $c$ is nonincreasing, it
agrees almost everywhere with $c^\downarrow$ and equality is immediate.
Conversely, suppose that $c$ is not nonincreasing.  By continuity there
exist $0<r<s<T$ such that
\[
 p:=c(r)<c(s)=:q.
\]
By continuity, choose $h_0>0$ sufficiently small that
$h_0<\min\{s-r,T-s\}$ and
\[
 c(s+v)>c(r+v)\qquad(0\le v\le h_0).
\]
For $0<h<h_0$, define
\[
 c_h(t):=
 \begin{cases}
  c\bigl(t+(s-r)\bigr),&r<t<r+h,\\
  c\bigl(t-(s-r)\bigr),&s<t<s+h,\\
  c(t),&\text{otherwise}.
 \end{cases}
\]
Direct integration gives
\[
 (\mathcal P c_h)(t)-(\mathcal P c)(t)=
 \begin{cases}
  \displaystyle\int_0^{t-r}\bigl(c(s+v)-c(r+v)\bigr)\dd v,
      &r\le t< r+h,\\[2mm]
  \displaystyle\int_0^h\bigl(c(s+v)-c(r+v)\bigr)\dd v,&r+h\le t\le s,\\[1mm]
  \displaystyle\int_{t-s}^h\bigl(c(s+v)-c(r+v)\bigr)\dd v,
      &s< t\le s+h,\\[2mm]
  0,&t\notin[r,s+h].
 \end{cases}
\]
Set
\[
\delta_h
:=
\int_0^h\bigl(c(s+v)-c(r+v)\bigr)\dd v.
\]
Therefore, $\mathcal P c_h\ge\mathcal P c\ge0$ and 
$
0\le \mathcal P c_h-\mathcal P c
\le\delta_h\le2h.
$
In particular,
\[
\|\mathcal P c_h-\mathcal P c\|_{L^\infty(0,T)}
\longrightarrow0.
\]
Since $\mathcal Pc>0$ on $(0,T)$ and $-1\le c_h\le1$, for
$0<h<h_0$ and $t\in[r,s+h]$ we have
\[
0<\min_{[r,s+h_0]}\mathcal Pc
\le (\mathcal Pc)(t)\le (\mathcal Pc_h)(t)\le T.
\]

We next compute the first variation of the functional $\mathscr E(c)$.
Write
\[
 \Delta_h(t):=
 A_j(c_h(t))\bigl((\mathcal P c_h)(t)\bigr)^a
 -A_j(c(t))\bigl((\mathcal P c)(t)\bigr)^a.
\]
Since \(c_h=c\) and \(\mathcal P c_h=\mathcal P c\) for $t\notin[r,s+h]$, one has
\[
 \frac{\mathscr E(c_h)-\mathscr E(c)}h
 =\frac1h\int_r^{r+h}\Delta_h\dd t
 +\frac1h\int_{r+h}^{s}\Delta_h\dd t
 +\frac1h\int_s^{s+h}\Delta_h\dd t.
\]
Recall that $\mathcal Pc$ has the uniform positive lower bound
$\min_{[r,s+h_0]}\mathcal Pc>0$ on $[r,s+h]$.
On \([r,r+h]\), one has $c_h(t)=c(t+s-r)$. Since
$\|\mathcal P c_h-\mathcal P c\|_{L^\infty(0,T)}\to0$ and $A_j\circ c$
is continuous, the integrand converges uniformly, and
\[
 \frac1h\int_r^{r+h}\Delta_h\dd t
 \longrightarrow
 \bigl(A_j(q)-A_j(p)\bigr)\bigl((\mathcal Pc)(r)\bigr)^a.
\]
On \([r+h,s]\), one has $c_h=c$ and
$\mathcal P c_h=\mathcal P c+\delta_h$ with $\delta_h/h\to q-p$.
Uniform differentiability of $x\mapsto x^a$ on the compact interval
$[\min_{[r,s+h_0]}\mathcal Pc,T]\subset(0,\infty)$ then gives
\[
\begin{aligned}
	 \frac1h\int_{r+h}^{s}\Delta_h\dd t
	&=\frac{\delta_h}{h}
	\int_{r+h}^{s}
	A_j(c(t))
	\frac{\bigl((\mathcal P c)(t)+\delta_h\bigr)^a
		-\bigl((\mathcal P c)(t)\bigr)^a}{\delta_h}\dd t\\
	&\longrightarrow
	a(q-p)\int_r^s
	A_j(c(t))\bigl((\mathcal P c)(t)\bigr)^{a-1}\dd t.
\end{aligned}
\]
On \([s,s+h]\), $c_h(t)=c(t-s+r)\to p$ and $c(t)\to q$ uniformly, while
both primitives converge uniformly to $(\mathcal Pc)(s)$. Hence
\[
 \frac1h\int_s^{s+h}\Delta_h\dd t
 \longrightarrow
 \bigl(A_j(p)-A_j(q)\bigr)\bigl((\mathcal Pc)(s)\bigr)^a.
\]
Combining the three limits yields
\[
\begin{aligned}
 \lim_{h\rightarrow0}\frac{\mathscr E(c_h)-\mathscr E(c)}h
 ={}&\bigl(A_j(q)-A_j(p)\bigr)
 \left[
 \bigl((\mathcal P c)(r)\bigr)^a
 -\bigl((\mathcal P c)(s)\bigr)^a
 \right]\\
 &+a(q-p)\int_r^s
 A_j(c(t))
 \bigl((\mathcal P c)(t)\bigr)^{a-1}
 \dd t.
\end{aligned}
\]
Since
\[
\bigl((\mathcal P c)(s)\bigr)^a
-\bigl((\mathcal P c)(r)\bigr)^a
=
a\int_r^s
c(t)\bigl((\mathcal P c)(t)\bigr)^{a-1}
\dd t,
\]
we conclude that
\begin{equation}\label{eq:strict-fragment-first-variation}
 \lim_{h\rightarrow0}\frac{\mathscr E(c_h)-\mathscr E(c)}h
 =a(q-p)\int_r^s\bigl(A_j(c(t))-\lambda_{p,q}c(t)\bigr)
       \bigl((\mathcal P c)(t)\bigr)^{a-1}\dd t,
\end{equation}
where $\lambda_{p,q}
:=
\frac{A_j(q)-A_j(p)}{q-p}$.

We claim that
\[
 A_j(\zeta)-\lambda_{p,q}\zeta>0\qquad(-1\le \zeta\le1).
\]
For $j=0$, this is immediate because $A_0\equiv1$ and
$\lambda_{p,q}=0$.
For $j\ge1$, the mean value theorem
gives, for some $\xi\in(p,q)$,
\[
 \lambda_{p,q}=A_j'(\xi)=-jJ_j(\arccos\xi),
 \qquad -jJ_j(\pi)<\lambda_{p,q}<0.
\]
If $\zeta=0$, then $A_j(0)=1$; if $\zeta>0$, then
$A_j(\zeta)-\lambda_{p,q}\zeta>0$ because $A_j(\zeta)\ge0$ and
$-\lambda_{p,q}\zeta>0$.
If $\zeta<0$, \eqref{eq:Aj-over-c-derivative} gives
\[
 \frac{A_j(\zeta)}{\zeta}
 \le \frac{A_j(-1)}{-1}
 =-jJ_j(\pi)<\lambda_{p,q}.
\]
Multiplication by $\zeta<0$ gives
$A_j(\zeta)-\lambda_{p,q}\zeta>0$, proving the claim.

The right-hand side of
\eqref{eq:strict-fragment-first-variation} is therefore strictly
positive.  Hence $\mathscr E(c_h)>\mathscr E(c)$ for all sufficiently
small $h>0$.  Since $c_h$ is measurable, takes values in $[-1,1]$,
and satisfies
\[
 \mathcal P c_h\ge\mathcal P c\ge0,
\]
the inequality \eqref{eq:primitive-rearrangement}, applied with $c_h$
in place of $c$, yields
\[
 \mathscr E(c_h)\le\mathscr E(c_h^\downarrow).
\]
Moreover, the fragment interchange is measure preserving, so $c_h$ is
equimeasurable with $c$ and hence
\[
 c_h^\downarrow=c^\downarrow
 \qquad\text{almost everywhere}.
\]
Consequently,
\[
 \mathscr E(c^\downarrow)=\mathscr E(c_h^\downarrow)
 \ge\mathscr E(c_h)>\mathscr E(c).
\]
Thus equality in \eqref{eq:primitive-rearrangement} is impossible
unless $c$ is nonincreasing.
\end{proof}

We now apply the primitive rearrangement inequality to the original and
rearranged weighted profiles.

\begin{proposition}
\label{prop:convex-comparison}
Let $K_k$ be the convex body generated by
\cref{cor:rearranged-C11-closure}.  Then
\begin{align}
  \mathcal M_k(\partial K_k)
  &=
  \mathcal M_k(\Sigma),
  \label{eq:H-preserved}\\
  \mathcal M_\ell(\partial K_k)
  &\ge
  \mathcal M_\ell(\Sigma)
  \qquad(0\le\ell<k).
  \label{eq:lower-order-increases}
\end{align}
Equality in \eqref{eq:lower-order-increases} for any fixed $\ell<k$
holds if and only if the original angle $\theta$ is nondecreasing,
equivalently, if and only if $\Sigma$ is convex.
\end{proposition}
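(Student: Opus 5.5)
The plan is to express every functional of the comparison body $\partial K_k$ in the same weighted coordinate $\tau$ used for $\Sigma$. The key observation is that the reconstruction in \cref{prop:rearranged-meridian} is designed so that $\tau=\lambda(\sigma)$ satisfies $\dd\tau=\widetilde x^{m-1}\dd\sigma$. This is exactly the weighted coordinate of $\partial K_k$ itself, with radius $X^*$ and angle $\vartheta^\uparrow$. Since $\partial K_k$ is admissible, axiconvex and $C^{1,1}$ by \cref{cor:rearranged-C11-closure}, \cref{prop:weighted-form} applies to it, and
\[
\mathcal M_k(\partial K_k)=\frac{\sphereareanm(n-k)}{n}\int_0^T F_k(\vartheta^\uparrow)\dd\tau .
\]
The equimeasurability identity \eqref{eq:rearrangement-functional-invariance} with $\Psi=F_k$ then gives \eqref{eq:H-preserved}.

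For $\ell<k$, start from \eqref{eq:unified-Mj-profile} and change variables $\dd s=X^{1-m}\dd\tau$. Since $x^{n-\ell-1}=X^{m-1}X^{k-\ell}$, we get
\[
\mathcal M_\ell(\Sigma)=\frac{\sphereareanm(n-\ell)}{n}\int_0^T F_\ell(\vartheta)X^{k-\ell}\dd\tau
=C\int_0^T A_\ell(\cos\vartheta)\,(\mathcal P c)^{a}\dd\tau .
\]
Here $c=\cos\vartheta$, $a=(k-\ell)/m>0$, $X^m=m\,\mathcal P c$ by \cref{lem:weighted-coordinate-calculus}, and $C=\frac{\sphereareanm(n-\ell)}{n}m^{a}$. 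On $(0,T)$ we have $\vartheta\in[0,\pi]$, so $F_\ell(\vartheta)=A_\ell(\cos\vartheta)$. The identical computation for $\partial K_k$ uses $c^\downarrow=\cos\vartheta^\uparrow$ by \eqref{eq:weighted-slope-rearrangement-explicit} and $(X^*)^m=m\,\mathcal P(c^\downarrow)$. The hypotheses of \cref{prop:primitive-rearrangement} hold: $c$ takes values in $[-1,1]$, and $\mathcal Pc=Y\ge0$. That proposition then yields \eqref{eq:lower-order-increases}.

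For the equality case, note that $c=\cos\vartheta$ is continuous on $[0,T]$ and $\mathcal Pc=X^m/m>0$ on $(0,T)$. So the strict part of \cref{prop:primitive-rearrangement} applies, and equality forces $\cos\vartheta$ to be nonincreasing. Since $\cos$ is injective on $[0,\pi]$, this means $\vartheta$ is nondecreasing. Because $s(\tau)$ is an increasing homeomorphism, this is equivalent to $\theta$ being nondecreasing, that is, $\Sigma$ convex by \cref{def:axiconvex}. Conversely, if $\theta$ is nondecreasing then $\vartheta=\vartheta^\uparrow$, so $X^*=X$ and $K_k$ is the body enclosed by $\Sigma$, giving equality.

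The main obstacle is justifying the weighted representation for $\partial K_k$. One must check that the $\tau$-coordinate of the reconstructed meridian coincides with the original $\tau$, including at the poles where $\lambda'$ degenerates. One must also check that the integration by parts behind \eqref{eq:unified-Mj-profile} is valid for the reconstructed profile. Both follow from $\lambda'=\widetilde x^{m-1}$ on $[0,L^*]$ and the $C^{1,1}$ regularity of \cref{lem:weighted-Lipschitz-preservation}. The rest of the argument is a direct substitution.
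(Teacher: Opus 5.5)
Your proposal is correct and follows the paper's proof essentially step by step. You identify the weighted coordinate of $\partial K_k$ with $\lambda(\sigma)$, obtain preservation of $\mathcal M_k$ by equimeasurability, reduce the lower-order functionals to the two sides of \cref{prop:primitive-rearrangement} with $c=\cos\vartheta$ and $a=(k-\ell)/m$, and settle the equality case using continuity of $\cos\vartheta$ and $Y>0$ on $(0,T)$. The only difference is that for ``$\theta$ nondecreasing $\iff$ $\Sigma$ convex'' you rely on the assertion stated in \cref{def:axiconvex}. The paper instead justifies this equivalence via the curvature and Tietze--Nakajima argument of \cref{cor:rearranged-C11-closure}, and conversely via $\dot\theta=\kappa_1\ge0$ a.e.
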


\begin{proof}
By \cref{cor:rearranged-C11-closure}, the unified profile formula
\eqref{eq:unified-Mj-profile} applies to $\partial K_k$, while
\cref{prop:C11-Steiner-compatibility} identifies the resulting
curvature functionals with the classical quermassintegrals of $K_k$.

Applying \eqref{eq:unified-Mj-profile} to the $C^{1,1}$ arclength
profile $(\widetilde x,\widetilde z)$ and changing variables
$\sigma=S(\tau)$ give
\[
\begin{aligned}
	\mathcal M_k(\partial K_k)
	=
	\frac{\sphereareanm(n-k)}{n}
	\int_0^{L^*}F_k(\widetilde\vartheta)
	\widetilde x^{m-1}\dd\sigma
	=
	\frac{\sphereareanm(n-k)}{n}
	\int_0^T F_k(\vartheta^\uparrow)\dd\tau.
\end{aligned}
\]
By the functional invariance
\eqref{eq:rearrangement-functional-invariance},
\[
\int_0^T F_k(\vartheta^\uparrow)\dd\tau
=
\int_0^T F_k(\vartheta)\dd\tau.
\]
Therefore, by \eqref{eq:body-tau},
\[
	\mathcal M_k(\partial K_k)
	=\mathcal M_k(\Sigma).
\]
This proves \eqref{eq:H-preserved}.

We now fix $0\le\ell<k$.  With the notation of
\cref{prop:primitive-rearrangement}, make the identifications
\[
 c=\cos\vartheta,
 \qquad
 \mathcal P c=Y,
 \qquad
 c^\downarrow=\cos\vartheta^\uparrow,
 \qquad
 \mathcal P(c^\downarrow)=Y^*,
 \qquad
 a=\frac{k-\ell}{m}>0.
\]
Because both $\vartheta$ and
$\vartheta^\uparrow$ take
values in $[0,\pi]$, one also has
\[
 A_\ell(c)=F_\ell(\vartheta),
 \qquad
 A_\ell(c^\downarrow)=F_\ell(\vartheta^\uparrow).
\]
The unified profile formula \eqref{eq:unified-Mj-profile} and
$\dd\tau=x(s)^{m-1}\dd s$ give
\[
 x(s)^{n-\ell-1}\dd s
 =X(\tau)^{k-\ell}\dd\tau.
\]
Since $X^m=mY$, we also have $X^{k-\ell}=m^aY^a$.  Consequently
\begin{align}
 \mathcal M_\ell(\Sigma)
 &=\frac{\sphereareanm(n-\ell)}n
   \int_0^T F_\ell(\vartheta)X^{k-\ell}\dd\tau \notag\\
 &=\frac{\sphereareanm(n-\ell)}n m^a
   \int_0^T A_\ell(c)Y^a\dd\tau .
 \label{eq:lower-functional-original}
\end{align}
For the $C^{1,1}$ convex comparison body $K_k$,
\eqref{eq:unified-Mj-profile} and the same change of variables give
\[
 \int_0^{L^*}F_\ell(\widetilde\vartheta)
       \widetilde x^{n-\ell-1}\dd\sigma
 =\int_0^T F_\ell(\vartheta^\uparrow)
       (X^*)^{k-\ell}\dd\tau.
\]
Finally, $(X^*)^m=mY^*$.  The preceding $C^{1,1}$ profile formula
therefore gives
\begin{align}
 \mathcal M_\ell(\partial K_k)
 &=\frac{\sphereareanm(n-\ell)}n
   \int_0^T F_\ell(\vartheta^\uparrow)(X^*)^{k-\ell}\dd\tau \notag\\
 &=\frac{\sphereareanm(n-\ell)}n m^a
   \int_0^T A_\ell(c^\downarrow)(Y^*)^a\dd\tau .
 \label{eq:lower-functional-rearranged}
\end{align}
The right-hand sides of
\eqref{eq:lower-functional-original} and
\eqref{eq:lower-functional-rearranged} are precisely the two sides of
\eqref{eq:primitive-rearrangement}, multiplied by the same positive
factor.
Hence \cref{prop:primitive-rearrangement} yields
\eqref{eq:lower-order-increases}. 

Finally, $\mathcal P c=Y>0$ on $(0,T)$.
The equality statement in \cref{prop:primitive-rearrangement} therefore
shows that equality holds exactly
when $c$ is nonincreasing.  Since $s\mapsto\tau(s)$ is strictly
increasing and cosine is strictly decreasing on $[0,\pi]$, this is
equivalent to $\theta$ being nondecreasing.  The curvature criterion
used in \cref{cor:rearranged-C11-closure} then gives convexity of
$\Sigma$.  Conversely, convexity yields
$\dot\theta=\kappa_1\ge0$ almost everywhere; since $\theta$ is
Lipschitz, it is nondecreasing.
\end{proof}

\section{Proof of the main inequalities}\label{sec:proofs}

With these preliminaries, the comparison principle reduces the 
axiconvex inequality to the classical Alexandrov--Fenchel inequality for
a convex body.  

\begin{proof}[Proof of \cref{thm:closed-high-dim}]
Fix \(0\le\ell<k\le n-1\). It follows from \cref{prop:convex-comparison} that there exists a convex body \(K_k\) such that
\[
 \mathcal M_k(\partial K_k)=\mathcal M_k(\Sigma),
 \qquad
 \mathcal M_\ell(\partial K_k)\ge\mathcal M_\ell(\Sigma).
\]
Applying the classical convex Alexandrov--Fenchel inequality
\eqref{eq:convex-AF-prelim} to \(K_k\) gives
\[
 \left(\frac{\mathcal M_\ell(\Sigma)}{|\Sn^n|}\right)^{1/(n-\ell)}
 \le
 \left(\frac{\mathcal M_\ell(\partial K_k)}{|\Sn^n|}\right)^{1/(n-\ell)}
 \le
 \left(\frac{\mathcal M_k(\partial K_k)}{|\Sn^n|}\right)^{1/(n-k)}
 =
 \left(\frac{\mathcal M_k(\Sigma)}{|\Sn^n|}\right)^{1/(n-k)}.
\]

If equality holds, then 
$
\mathcal M_\ell(\partial K_k)
=
\mathcal M_\ell(\Sigma).
$
So \cref{prop:convex-comparison} makes \(\theta\)
nondecreasing and the original domain convex.  Applying
\cref{prop:C11-Steiner-compatibility} gives
\[
 \mathcal M_j(\Sigma)=(n+1)W_{j+1}(\Omega).
\]
The assumed equality is therefore equality in the classical convex
Alexandrov--Fenchel inequality for $\Omega$.  Its rigidity statement then forces
$\Omega$ to be a ball. 
\end{proof}

\begin{proof}[Proof of \cref{cor:complete-W-chain}]
Suppose first that $1\le i<j\le n$ and put
$\ell=i-1$, $k=j-1$.  Since
$|\Sn^n|=(n+1)\omega_{n+1}$ and
$\mathcal M_\ell=(n+1)W_i$, the inequality \eqref{eq:axiconvex-curvature-inequality} is exactly
\begin{equation}\label{eq:W-chain-from-W1}
	 \left(\frac{W_i(\Omega)}{\omega_{n+1}}\right)^{1/(n+1-i)}
	\le
	\left(\frac{W_j(\Omega)}{\omega_{n+1}}\right)^{1/(n+1-j)}.
\end{equation}
For $i=0$, the Euclidean isoperimetric inequality is
\[
 \left(\frac{W_0(\Omega)}{\omega_{n+1}}\right)^{1/(n+1)}
 \le
 \left(\frac{W_1(\Omega)}{\omega_{n+1}}\right)^{1/n}.
\]
Combining it with \eqref{eq:W-chain-from-W1} proves \eqref{eq:W-chain}.  If $i\ge1$, equality forces a ball by
\cref{thm:closed-high-dim}.  If $i=0$, equality holds in the
isoperimetric inequality, and again $\Omega$ is a ball. 
\end{proof}

To treat profiles whose tangent angle may leave the interval $[0,\pi]$, we fold the angle
back into $[0,\pi]$ as in \cite[Section~6]{DHMT}. 

\begin{lemma}
\label{lem:rotational-fold}
Let $\Sigma^n\subset\R^{n+1}$ be admissible in the sense of
\cref{def:admissible-axisymmetric}, with profile $\gamma=(x,z)$
and tangent angle $\theta$, and set
\[
 \bar\gamma(s)=(\bar x(s),\bar z(s))
 :=\left(\int_0^s\cos\theta(t)\dd t,\int_0^s|\sin\theta(t)|\dd t\right),
 \qquad 0\le s\le L.
\]
Then $\bar\gamma$ is an admissible $C^{1,1}$ profile generating an
axiconvex hypersurface $\bar\Sigma$ and
\begin{equation}\label{eq:folded-functional-identification}
 \mathcal M_j(\bar\Sigma)
 =\widehat{\mathcal M}_j(\Sigma)
 \qquad(0\le j\le n-1).
\end{equation}
Moreover, \(\bar\Sigma\) is a round sphere if and only if \(\Sigma\) is
a round sphere.
\end{lemma}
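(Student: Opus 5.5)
The proof has three parts: identify the folded angle and check admissibility of $\bar\gamma$, compare curvature integrands pointwise, and settle the sphere rigidity.

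\textbf{Admissibility.} Set $\bar\theta:=\arccos(\cos\theta)\in[0,\pi]$. Then $\cos\bar\theta=\cos\theta$ and $\sin\bar\theta=|\sin\theta|$, so $\bar\gamma'=(\cos\bar\theta,\sin\bar\theta)$ is a unit vector and $\bar\gamma$ is arclength parametrized.
\begin{itemize}
\item Since $u\mapsto\arccos(\cos u)$ is $1$-Lipschitz, $\bar\theta\in W^{1,\infty}(0,L)$ and $\bar\gamma\in C^{1,1}$.
\item The endpoint values are $\bar\theta(0)=\arccos 1=0$ and $\bar\theta(L)=\arccos(-1)=\pi$.
\item Because $\bar x=x$ identically, conditions \eqref{eq:admissible-closure} (cosine part) and \eqref{eq:admissible-positive-radius} are inherited directly from $\Sigma$.
\item The height $\bar z(L)=\int_0^L|\sin\theta|$ is positive: otherwise $\sin\theta=0$ a.e., so $\theta\in\pi\mathbb Z$ is constant by continuity, contradicting $\theta(0)=0$, $\theta(L)=\pi$.
\end{itemize}
By the equivalent description of admissible axiconvex meridians via \eqref{eq:admissible-endpoints}--\eqref{eq:admissible-positive-radius}, together with \cite[Proposition~3.1]{DHMT} for injectivity, $\bar\gamma$ is an admissible axiconvex profile.

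\textbf{Integrand identification.} This step carries the main (measure-theoretic) point. I will use \eqref{eq:H-formula} directly, not \eqref{eq:unified-Mj-profile}, since the latter was only derived under axiconvexity via $F_j$. The azimuthal curvatures satisfy $\sin\bar\theta/\bar x=\varepsilon_\theta\,\sin\theta/x$. For the meridian curvature I will show $\dot{\bar\theta}=\varepsilon_\theta\dot\theta$ a.e.; I expect this to be the main obstacle. The argument:
\begin{itemize}
\item Away from $\{\sin\theta=0\}$, $\bar\theta$ is locally either $\theta-2\pi p$ or $2\pi p-\theta$, with sign matching $\varepsilon_\theta$, so the identity holds there.
\item On $\{\sin\theta=0\}$, $\theta$ takes values in the discrete set $\pi\mathbb Z$. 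At a.e. point of this set both $\theta$ and $\bar\theta$ are differentiable with derivative $0$, since a Lipschitz function has zero derivative a.e. on any level set. Both sides therefore vanish there, with the convention $\operatorname{sgn}(0)=0$.
\end{itemize}
Substituting into \eqref{eq:H-integrand-prelim}, the integrand of $\bar H_j\,dA$ is $\varepsilon_\theta^{\,j}$ times that of $H_j\,dA$: the term $k\dot\theta\sin^{k-1}\theta$ picks up $\varepsilon_\theta\cdot\varepsilon_\theta^{k-1}$, and $\sin^k\theta$ picks up $\varepsilon_\theta^{k}$. The same holds at the level of $H_j$ itself, since each monomial of $\sigma_j$ is a product of $j$ curvatures each multiplied by $\varepsilon_\theta$. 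Since $\bar x=x$, we have $d\bar A=dA$, and integrating gives \eqref{eq:folded-functional-identification}. The case $j=0$ is just equality of areas.

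\textbf{Rigidity.} If $\Sigma$ is a round sphere centered on the axis, then $\theta\in[0,\pi]$ already, so $\bar\theta=\theta$ and $\bar\Sigma=\Sigma$. Conversely, suppose $\bar\Sigma$ is a round sphere. Then $\bar\theta$ is strictly increasing on $[0,L]$, so it attains each value in $(0,\pi)$ exactly once. The set $\{\sin\theta=0\}$ is closed, and on it $\bar\theta\in\{0,\pi\}$. By strict monotonicity, $\bar\theta$ takes these values only at $s=0$ and $s=L$, so this set is $\{0,L\}$. Hence $\varepsilon_\theta$ is constant on $(0,L)$. It is positive because $\theta(0)=0$ and $\theta(L)=\pi$ force $\theta\in(0,\pi)$ on the connected interval $(0,L)$. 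Therefore $\theta=\bar\theta$, and $\Sigma=\bar\Sigma$ is a round sphere.
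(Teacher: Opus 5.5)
Your proposal is correct and follows essentially the same route as the paper. The shared steps are the folded angle $\bar\theta=\arccos(\cos\theta)$ with $\bar x=x$, the a.e.\ identity $\dot{\bar\theta}=\varepsilon_\theta\dot\theta$ substituted into \eqref{eq:H-integrand-prelim}, and rigidity by ruling out zeros of $\sin\theta$ in $(0,L)$. The only differences are cosmetic: you obtain $\bar z(L)>0$ by contradiction rather than from $\bar z\ge z$, and you fix the sign of $\sin\theta$ via the endpoint values of $\theta$ (concluding $\theta=\bar\theta$ directly) rather than via $z(L)>0$ followed by explicit integration of the semicircle.
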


\begin{proof}
Set $\bar\theta:=\arccos(\cos \theta)$.  The map
$u\mapsto\arccos(\cos u)$ is the $2\pi$-periodic triangular wave that
agrees with $|u|$ on $[-\pi,\pi]$, hence it is globally $1$-Lipschitz.  Therefore
$\bar{\theta}$ is Lipschitz continuous and takes values in \([0,\pi]\). It satisfies 
\[
\cos\bar\theta=\cos\theta,\qquad
\sin\bar\theta=|\sin\theta|,\qquad
\bar\theta(0)=0,\qquad \bar\theta(L)=\pi.
\]
By definition, for every $s\in[0,L]$,
\begin{equation*}
	\begin{aligned}
		\bar{x}(s)=\int_0^s\cos\bar{\theta}(t)\dd t=x(s),\qquad
		\bar{z}(s)=\int_0^s\sin\bar{\theta}(t)\dd t\ge z(s).
	\end{aligned}
\end{equation*}
Hence, we have
\begin{equation*}
	\bar{x}(L)=x(L)=0,\quad \bar{x}(s)=x(s)>0\quad\text{for }0<s<L,\quad \bar{z}(L)\ge z(L)>0.
\end{equation*}
Since $\bar\theta\in C([0,L];[0,\pi])$,
\cite[Proposition~3.1]{DHMT} implies that $\bar\gamma$ is injective.
The Lipschitz continuity of $\bar\theta$ gives $\bar\gamma\in C^{1,1}$.
The endpoint conditions and the radial-graph argument in
\cref{cor:rearranged-C11-closure} give global $C^{1,1}$
regularity of $\bar\Sigma$.
These facts establish admissibility
in the sense of \cref{def:admissible-axisymmetric}.
The generated hypersurface $\bar\Sigma$ is axiconvex because
$0\le\bar\theta\le\pi$.

It remains to identify the curvatures. Both $\dot\theta$ and $\dot{\bar\theta}$ vanish almost
everywhere on $\{\sin\theta=0\}$.
Therefore 
\[
\dot{\bar\theta}=\varepsilon_\theta\dot\theta
\qquad\text{a.e. on }(0,L).
\]
For $1\le j\le n-1$, substituting $\bar x=x$,
$\sin\bar\theta=\varepsilon_\theta\sin\theta$, and
$\dot{\bar\theta}=\varepsilon_\theta\dot\theta$ into
\eqref{eq:H-integrand-prelim} gives directly
\[
 \begin{aligned}
 \mathcal M_j(\bar\Sigma)
 &=\frac{\sphereareanm}{n}\int_0^L
 \bigl[j\dot{\bar\theta}\sin^{j-1}\bar\theta\,x^{n-j}
       +(n-j)\sin^j\bar\theta\,x^{n-j-1}\bigr]\dd s\\
 &=\frac{\sphereareanm}{n}\int_0^L\varepsilon_\theta^j
 \bigl[j\dot\theta\sin^{j-1}\theta\,x^{n-j}
       +(n-j)\sin^j\theta\,x^{n-j-1}\bigr]\dd s\\
 &=\int_\Sigma\varepsilon_\theta^jH_j\dd A
 =\widehat{\mathcal M}_j(\Sigma).
 \end{aligned}
\]
For $j=0$, the area formula \eqref{eq:area-element-prelim} and
$\widehat H_0=1$ give
\[
 \mathcal M_0(\bar\Sigma)
 =\sphereareanm\int_0^L\bar x^{n-1}\dd s
 =\sphereareanm\int_0^L x^{n-1}\dd s
 =\widehat{\mathcal M}_0(\Sigma).
\]
This proves \eqref{eq:folded-functional-identification}.

Suppose now that $\bar\Sigma$ is a round sphere of radius $R$.
Its arclength meridian satisfies
\[
 L=\pi R,\qquad
 \bar\theta(s)=\frac{s}{R},\qquad
 x(s)=R\sin(s/R).
\]
By the folding identity,
\[
 |\sin\theta(s)|=\sin\bar\theta(s)=\sin(s/R)>0,
 \qquad 0<s<L.
\]
Thus the continuous function $\sin\theta$ has constant sign on $(0,L)$.
Since
\[
 0<z(L)=\int_0^L\sin\theta(s)\dd s,
\]
the function $\sin\theta$ is positive, so $\sin\theta(s)=\sin(s/R)$ on $(0,L)$.
Integrating $\dot z=\sin\theta$ gives
\[
 z(s)=\int_0^s\sin(t/R)\dd t
     =R\bigl(1-\cos(s/R)\bigr),
 \qquad 0\le s\le L.
\]
Together with $x(s)=R\sin(s/R)$, this yields
\[
 x(s)^2+\bigl(z(s)-R\bigr)^2=R^2,
 \qquad 0\le s\le L.
\]
Hence the original meridian is a semicircle of radius $R$, and
$\Sigma$ is a round sphere.  Conversely, if $\Sigma$ is a round sphere, its 
tangent angle already lies in $[0,\pi]$. Thus
$|\sin\theta|=\sin\theta$, so folding leaves the meridian unchanged. Therefore $\bar\Sigma$ is also a round sphere.
\end{proof}

The above lemma reduces the problem to an admissible
axiconvex hypersurface, so the inequality in \cref{thm:folded-homogeneous-chain} reduces to its
ordinary curvature functionals.

\begin{proof}[Proof of \cref{thm:folded-homogeneous-chain}]
Let \(\bar\Sigma\) be the folded hypersurface constructed by
\cref{lem:rotational-fold}.  The identification
\eqref{eq:folded-functional-identification} reduces every folded
functional to a curvature functional of the
axiconvex hypersurface \(\bar\Sigma\).
By \eqref{eq:folded-functional-identification} and the positivity following \eqref{eq:unified-Mj-profile},
\[
\widehat{\mathcal M}_j(\Sigma)
=\mathcal M_j(\bar\Sigma)>0,
\qquad 0\le j\le n-1.
\]
For \(0\le\ell<k\le n-1\), the inequality in \cref{thm:closed-high-dim}
and \eqref{eq:folded-functional-identification} give
\[
 \left(
   \frac{\widehat{\mathcal M}_\ell(\Sigma)}{|\Sn^n|}
 \right)^{1/(n-\ell)}
 \le
 \left(
   \frac{\widehat{\mathcal M}_k(\Sigma)}{|\Sn^n|}
 \right)^{1/(n-k)}.
\]

If equality holds in \eqref{eq:folded-complete-chain}, then equality
holds in the corresponding axiconvex inequality for
\(\bar\Sigma\).  Thus \cref{thm:closed-high-dim} makes
\(\bar\Sigma\) a round sphere, and \cref{lem:rotational-fold} then
makes \(\Sigma\) a round sphere. Conversely, for a sphere of radius \(R\), one has
\(\varepsilon_\theta=1\) away from the poles and
$
 \widehat{\mathcal M}_j(\Sigma)
 =|\Sn^n|R^{n-j}.
$
Direct substitution gives equality in
\eqref{eq:folded-complete-chain}.
\end{proof}

For even orders, the curvature identities from the folding lemma remove
the sign correction.

\begin{proof}[Proof of \cref{cor:even-order-chain}]
For $j=0$, we have $\widehat H_0=H_0=1$ by definition. For even $0<j\le n-1$, $\varepsilon_\theta^j=1$ where
$\sin\theta\ne0$. On $\{\sin\theta=0\}$, all azimuthal principal curvatures vanish.
Hence $H_j=0$ almost everywhere on $\{\sin\theta=0\}$.
Thus, for every even $0\le j\le n-1$,
\[
 \widehat H_j=H_j\quad\text{a.e.},\qquad
 \widehat{\mathcal M}_j(\Sigma)=\mathcal M_j(\Sigma).
\]
Using this identity in
\eqref{eq:folded-complete-chain} proves
\eqref{eq:even-order-curvature-chain}.  For even $\ell$ and arbitrary
$k$ with $0\le\ell<k\le n-1$, 
\eqref{eq:folded-complete-chain} and $\widehat H_k\le|H_k|$ give
\[
 \left(\frac{\mathcal M_\ell(\Sigma)}{|\Sn^n|}\right)^{1/(n-\ell)}
 \le
 \left(\frac{\widehat{\mathcal M}_k(\Sigma)}{|\Sn^n|}\right)^{1/(n-k)}
 \le
 \left(\frac{\int_\Sigma|H_k|\dd A}{|\Sn^n|}\right)^{1/(n-k)}.
\]
This proves \eqref{eq:even-absolute-chain}.  Equality in either
\eqref{eq:even-order-curvature-chain} or \eqref{eq:even-absolute-chain}
forces equality in \eqref{eq:folded-complete-chain}, so $\Sigma$ is a
round sphere by \cref{thm:folded-homogeneous-chain}.  Conversely, a round
sphere realizes equality in both comparisons.
\end{proof}

\begin{remark}
The main inequalities of \cref{thm:folded-homogeneous-chain} can be extended from a single connected hypersurface to a finite disjoint union $\Sigma=\bigsqcup_{\alpha=1}^N\Sigma_\alpha$, where each component satisfies \cref{def:admissible-axisymmetric}.
The components may have different rotation axes.  Define the folded functionals
additively:
\[
 \widehat{\mathcal M}_j(\Sigma)
 :=\sum_{\alpha=1}^N\widehat{\mathcal M}_j(\Sigma_\alpha),
 \qquad 0\le j\le n-1.
\]
The inequality \eqref{eq:folded-complete-chain} remains valid.  
Indeed, for
$p=\frac{n-k}{n-\ell}\in(0,1)$,
\begin{align*}
 \widehat{\mathcal M}_k(\Sigma)
 \ge |\Sn^n|^{1-p}
     \sum_{\alpha=1}^N\widehat{\mathcal M}_\ell(\Sigma_\alpha)^p
 \ge |\Sn^n|^{1-p}
     \widehat{\mathcal M}_\ell(\Sigma)^p.
\end{align*}
The second inequality in the above estimate is strict when $N\ge2$,
since every summand is positive.  Thus equality 
holds exactly when
$N=1$ and the single component is a round sphere.
If all components are axiconvex, the folded functionals coincide with
their ordinary curvature integrals.
\end{remark}

\section*{Declarations}

{\bf AI usage.}
The authors used GPT-6.0 Astra and KIMI K3 for exploratory computations, possible proof directions, and editorial polishing.  In particular,
GPT-6.0 Astra assisted in developing the proof of
\cref{prop:primitive-rearrangement}
by suggesting a possible proof strategy. The authors have thoroughly checked all mathematical derivations and proofs and take full responsibility for the entire content of this manuscript.

{\bf Acknowledgements.}
W. Sheng was partially supported by National Key R$\&$D Program of China (No. 2022YFA1005500) and Natural Science Foundation of China under Grant No. 12571063.

\textbf{Conflict of interest.} The authors declare that they have no
conflict of interest.

\textbf{Data availability.} No datasets were generated or analysed
during the current study.



\begin{thebibliography}{99}

\bibitem{AFM}
V.~Agostiniani, M.~Fogagnolo, and L.~Mazzieri,
\emph{Minkowski inequalities via nonlinear potential theory},
Arch. Ration. Mech. Anal. \textbf{244} (2022), no.~1, 51--85.

\bibitem{Alexandrov1937}
A.~D. Alexandrov,
\emph{Zur Theorie der gemischten Volumina von konvexen
K\"orpern. II. Neue Ungleichungen zwischen den gemischten
Volumina und ihre Anwendungen},
Mat. Sb. (N.S.) \textbf{2(44)} (1937), no.~6, 1205--1238
(in Russian).

\bibitem{Alexandrov1938}
A.~D. Alexandrov,
\emph{Zur Theorie der gemischten Volumina von konvexen
K\"orpern. III. Die Erweiterung zweier Lehrs\"atze Minkowskis
\"uber die konvexen Polyeder auf beliebige konvexe Fl\"achen},
Mat. Sb. (N.S.) \textbf{3(45)} (1938), no.~1, 27--46
(in Russian).

\bibitem{Brendle2021}
S.~Brendle,
\emph{The isoperimetric inequality for a minimal submanifold in Euclidean space},
J. Amer. Math. Soc. \textbf{34} (2021), no.~2, 595--603.

\bibitem{ChangWang2013}
S.-Y.~A. Chang and Y.~Wang,
\emph{Inequalities for quermassintegrals on $k$-convex domains},
Adv. Math. \textbf{248} (2013), 335--377.

\bibitem{ChangWang}
S.-Y.~A. Chang and Y.~Wang,
\emph{Some higher order isoperimetric inequalities via the method of optimal transport},
Int. Math. Res. Not. IMRN \textbf{2014} (2014), no.~24, 6619--6644.

\bibitem{CEK}
O.~Chodosh, M.~Eichmair, and T.~Koerber,
\emph{On the Minkowski inequality near the sphere},
Int. Math. Res. Not. IMRN \textbf{2026} (2026), no.~13,
Paper No.~rnag142.

\bibitem{DHMT}
J.~Dalphin, A.~Henrot, S.~Masnou, and T.~Takahashi,
\emph{On the minimization of total mean curvature},
J. Geom. Anal. \textbf{26} (2016), no.~4, 2729--2750.

\bibitem{Federer1959}
H.~Federer,
\emph{Curvature measures},
Trans. Amer. Math. Soc. \textbf{93} (1959), no.~3, 418--491.

\bibitem{FreireSchwartz2014}
A.~Freire and F.~Schwartz,
\emph{Mass-capacity inequalities for conformally flat manifolds with boundary},
Comm. Partial Differential Equations \textbf{39} (2014), no.~1, 98--119.

\bibitem{Gerhardt1990}
C.~Gerhardt,
\emph{Flow of nonconvex hypersurfaces into spheres},
J. Differential Geom. \textbf{32} (1990), no.~1, 299--314.

\bibitem{Glaudo}
F.~Glaudo,
\emph{Minkowski inequality for nearly spherical domains},
Adv. Math. \textbf{408} (2022), Paper No.~108595.

\bibitem{GuanLi}
P.~Guan and J.~Li,
\emph{The quermassintegral inequalities for $k$-convex starshaped domains},
Adv. Math. \textbf{221} (2009), no.~5, 1725--1732.

\bibitem{Huisken2009}
G.~Huisken,
\emph{Inverse mean curvature flow and isoperimetric inequalities},
lecture, Institute for Advanced Study, 2009,
video available at \url{https://video.ias.edu/node/233}.

\bibitem{HuiskenIlmanen2001}
G.~Huisken and T.~Ilmanen,
\emph{The inverse mean curvature flow and the Riemannian Penrose inequality},
J. Differential Geom. \textbf{59} (2001), no.~3, 353--437.

\bibitem{Kawohl}
B.~Kawohl,
\emph{Rearrangements and Convexity of Level Sets in PDE},
Lecture Notes in Mathematics, vol.~1150, Springer, Berlin, 1985.

\bibitem{Kesavan}
S.~Kesavan,
\emph{Symmetrization and Applications},
Series in Analysis, vol.~3, World Scientific, Hackensack, 2006.

\bibitem{LWZ}
H.~Li, Y.~Wan, and Y.~Zhang,
\emph{On the minimization of total $(n-2)$-th mean curvature in $\R^n$},
J. Geom. Anal. \textbf{36} (2026), Paper No.~363.

\bibitem{ManiLevitska1993}
P.~Mani-Levitska,
\emph{Characterizations of convex sets},
in Handbook of Convex Geometry, Vol.~A,
P.~M.~Gruber and J.~M.~Wills (eds.),
North-Holland, Amsterdam, 1993, pp.~19--41.

\bibitem{MichaelSimon}
J.~H. Michael and L.~M. Simon,
\emph{Sobolev and mean-value inequalities on generalized submanifolds of $\R^n$},
Comm. Pure Appl. Math. \textbf{26} (1973), no.~3, 361--379.

\bibitem{NobiliViolo}
F.~Nobili and I.~Y. Violo,
\emph{Fine P\'olya--Szeg\H{o} rearrangement inequalities in metric spaces and applications},
Calc. Var. Partial Differential Equations \textbf{64} (2025),
no.~9, Paper No.~276.

\bibitem{Qiu}
G.~Qiu,
\emph{A family of higher-order isoperimetric inequalities},
Commun. Contemp. Math. \textbf{17} (2015), no.~3,
Paper No.~1450015, 20 pp.

\bibitem{Schneider}
R.~Schneider,
\emph{Convex Bodies: The Brunn--Minkowski Theory},
2nd expanded ed., Encyclopedia of Mathematics and its Applications,
vol.~151, Cambridge University Press, Cambridge, 2014.

\bibitem{Scholtes}
S.~Scholtes,
\emph{On hypersurfaces of positive reach, alternating Steiner formulae
and Hadwiger's problem},
arXiv:1304.4179, 2013.

\bibitem{Urbas1990}
J.~I.~E. Urbas,
\emph{On the expansion of starshaped hypersurfaces by symmetric
functions of their principal curvatures},
Math. Z. \textbf{205} (1990), no.~3, 355--372.

\bibitem{WangYang}
Y.~Wang and S.~Yang,
\emph{Stability of Minkowski inequality for nearly spherical sets},
arXiv:2511.21943, 2025.

\end{thebibliography}
\end{document}